\documentclass[hidelinks,onefignum,onetabnum,onealgnum]{siamart251216}

\usepackage{amsfonts}
\usepackage{amssymb}
\usepackage{graphicx}
\usepackage{epstopdf}

\newsiamremark{remark}{Remark}
\newsiamremark{hypothesis}{Hypothesis}
\crefname{hypothesis}{Hypothesis}{Hypotheses}
\newsiamthm{claim}{Claim}
\newsiamremark{assumption}{Assumption}
\crefname{assumption}{Assumption}{Assumptions}
\crefname{appendix}{Appendix}{Appendices}
\Crefname{appendix}{Appendix}{Appendices}

\usepackage[T1]{fontenc}
\usepackage[utf8]{inputenc}
\usepackage{mathtools}
\usepackage{bm}
\usepackage{booktabs}
\usepackage{multirow}
\usepackage{array}
\usepackage{stmaryrd}
\usepackage[labelsep=period, textfont=it, labelfont={normalfont,sc}]{caption}
\usepackage{subcaption}
\usepackage{enumitem}
\usepackage{tikz}
\usetikzlibrary{shapes.geometric,arrows.meta,positioning,calc}
\usepackage{tikz-cd}
\usepackage{algpseudocode}

\newcommand{\R}{\mathbb{R}}

\newcommand{\bu}{\bm{u}}
\newcommand{\bv}{\bm{v}}

\newcommand{\Net}{\operatorname{Net}}
\newcommand{\MSE}{\operatorname{MSE}}
\newcommand{\Res}{\mathcal{R}}
\newcommand{\EG}{\mathcal{E}_G}
\newcommand{\ET}{\mathcal{E}_T}
\newcommand{\norm}[2]{\left\|#1\right\|_{#2}}

\graphicspath{{figure/}{fig/}}

\headers{Jin--Xin relaxation gradual convergence for PINNs}{E. Lorin, Y. Tang, X. Yang, and Y. Zhu}

\title{A Jin--Xin Relaxation Gradual Convergence Method for
  Conservation-Law PINNs%
  \thanks{Submitted on \today.
    \funding{X.Y. was partially supported by the ONR grant under \#N00014-24-1-2432.}}}

\author{
  Emmanuel Lorin\thanks{School of Mathematics and Statistics, Carleton University, Ottawa, ON K1S 5B6, Canada; and Centre de Recherches
    Math\'ematiques, Universit\'e de Montr\'eal, Montr\'eal, QC H3T 1J4,
    Canada. \texttt{elorin@math.carleton.ca}}
  \and
  Yuchong Tang\thanks{Qiuzhen College, Tsinghua University, Beijing 100084, China.
    \texttt{tyc22@mails.tsinghua.edu.cn}}
  \and
  Xu Yang\thanks{Department of Mathematics, University of California,
    Santa Barbara, CA 93106, USA. \texttt{xy6@ucsb.edu}}
  \and
  Yi Zhu\thanks{Yau Mathematical Sciences Center, Tsinghua University, Beijing 100084, China; and
Yanqi Lake Beijing Institute of Mathematical Sciences and Applications, Beijing 101408, China
  (\email{yizhu@tsinghua.edu.cn}).}
}

\usepackage{amsopn}

\usepackage{placeins}
\usepackage{enumitem}

\ifpdf
\hypersetup{
  pdftitle={A Jin-Xin Relaxation Gradual Convergence Method for PINNs},
  pdfauthor={E. Lorin, Y. Tang, X. Yang, and Y. Zhu}
}
\fi

\begin{document}
\maketitle

\begin{abstract}
The Jin--Xin relaxation of a nonlinear hyperbolic conservation law introduces a relaxation parameter that controls the width of the internal layer resolving a shock; the discontinuity of the limiting conservation law emerges only in the singular limit as this width vanishes.  Physics-informed neural networks (PINNs) use smooth network approximations and are therefore not well suited to this limit, while relaxation PINNs with a fixed parameter resolve only a single scale and cannot follow the multiscale transition toward the limiting solution.  We propose the Jin--Xin relaxation gradual convergence method (JXRGCM), which treats the relaxation parameter as a continuation variable, annealing it to zero along a schedule and warm-starting each stage from the previous one, so that the approximation follows the relaxation profile through progressively sharper scales.  Under the sub-characteristic condition we establish a stability estimate whose constant is independent of the relaxation parameter; combined with the relaxation limit, it yields for scalar conservation laws an $L^2$ convergence rate of $\mathcal{O}(\varepsilon^{1/4})$ toward the entropy solution. Numerical experiments on the Burgers equation, the shallow-water dam-break problem, and the Sod shock tube show that JXRGCM improves shock and rarefaction resolution compared with fixed-parameter relaxation PINNs and other physics-informed approaches.
\end{abstract}

% REQUIRED
\begin{keywords}
Physics-informed neural networks, hyperbolic conservation laws, Jin--Xin relaxation, shock capturing, error estimates
\end{keywords}

% REQUIRED
\begin{MSCcodes}
68T07, 35L65, 65M99, 65D15
\end{MSCcodes}

%======================================================================
\section{Introduction}\label{sec:intro}
We consider the one-dimensional system of conservation laws
\begin{equation}\label{eq:cons-law}
\partial_t \mathbf{u} + \partial_x \mathbf{f}\bigl(\mathbf{u}\bigr) = 0, 
\end{equation}
subject to the initial condition
\begin{equation}\label{eq:ic}
\mathbf{u}(0,x) = \mathbf{u}_0(x), \quad x \in \mathbb{R},
\end{equation}
where \(\mathbf{u}:\mathbb{R}^{+}\times\mathbb{R}\to\mathbb{R}^{n}\) is the vector of conserved variables, \(\mathbf{f}: \mathbb{R}^{n}\to \mathbb{R}^{n}\) is the flux, and \(\mathbf{u}_0(x)\) denotes the prescribed initial data. Solutions of \cref{eq:cons-law} generically develop discontinuities (shocks and contact discontinuities) in finite time even from smooth initial data, with the admissible weak solution selected by the Rankine--Hugoniot and entropy conditions \cite{LeVeque2002,Toro2009}.  Such discontinuities are central to aerodynamics \cite{Anderson1989}, transport in porous media \cite{FuksTchelepi2020}, molecular diffusion \cite{Crank1979}, and many other settings.  Classical shock-capturing discretizations resolve them through total-variation-diminishing schemes \cite{Harten1983}, weighted essentially non-oscillatory reconstructions \cite{LiuOsherChan1994}, adaptive mesh refinement \cite{BergerColella1989}, or entropy-stable discontinuous Galerkin methods \cite{ChenShu2020}, but balancing accuracy, robustness, and efficiency for complex discontinuous problems remains demanding.

Physics-informed neural networks (PINNs) approximate the solution by a smooth network trained to minimize the residual of the governing equation together with data-fitting terms  \cite{Raissi2019,Lagaris1998,Hornik1989}.  They are mesh-free, adapt naturally to high dimension, and integrate heterogeneous data, but perform poorly on hyperbolic conservation laws with discontinuities for three intertwined reasons.  First, the continuous, high-order-differentiable network conflicts with the discontinuous weak solution; the spectral bias of networks toward low frequencies makes sharp fronts hard to fit and induces Gibbs-type oscillations \cite{GhoreishiNaderan2026}.  Second, the strong-form $L^2$ residual is ill-defined across a shock, where derivatives behave like a Dirac mass, so the optimizer over-dissipates the front and the interior, initial, and boundary losses conflict near the discontinuity \cite{Krishnapriyan2021}.  Third, the resulting non-smooth landscape degrades gradient-based training \cite{WangTengPerdikaris2021}.  The PINN generalization error is, moreover, controlled by the stability of the underlying PDE, i.e.\ the sensitivity of the solution to residual perturbations \cite{DeRyckMishra2024}.

\paragraph{Related work}  Following the survey of \cite{Abbasi2025}, remedies fall into three groups.  {Physical modifications} change the governing equation: artificial viscosity \cite{VonNeumannRichtmyer1950,GhoreishiNaderan2026}, entropy-condition regularization \cite{Patel2022} and coupled-integral conservation \cite{WangYang2024}, weak-form residuals (wPINNs) \cite{wPINN2024}, and relaxation of the conservation law into an auxiliary-flux system, as in the relaxation neural network (RelaxNN) \cite{ZhouMa2024} and approximate-Riemann formulations \cite{UrbanPons2025}.  {Loss and training modifications} reweight or resample without changing the equation, e.g.\ gradient-annihilation weighting and residual-adaptive sampling \cite{Wu2023}.  {Architectural modifications} change the network, e.g.\ transformer backbones \cite{Zhao2023} and space-time domain decomposition \cite{JagtapKarniadakis2020,LorinNovruzi}.  The relaxation approach is attractive because it converts the stiff strong-form constraint into a benign $L^2$ coupling: writing $\partial_t u+\partial_x v=0$, $v=f(u)$, the spatial derivative acts on the smoother auxiliary flux $v_\theta$ rather than on $f(u_\theta)$, and the constitutive relation becomes a local $L^2$ penalty \cite{ZhouMa2024}.  However, RelaxNN and related methods fix the relaxation parameter $\varepsilon$ at a single value: too large leaves the discontinuity only approximately resolved, while too small reinstates the stiffness, so the smooth network is again asked to fit a profile it cannot represent and spurious shock-like structures reappear.

The difficulty is better understood by asking what scale the network is being asked to resolve.  The relaxation parameter $\varepsilon$ is not an arbitrary regularization strength: by the Chapman--Enskog expansion \cref{eq:chapman-enskog} it sets the effective diffusion, and hence the width $\mathcal{O}(\varepsilon)$ of the internal layer through which the relaxed solution passes from one side of a shock to the other.  A network of fixed capacity can represent that layer while it is wide and cannot once it is narrow, so a fixed-$\varepsilon$ method must either stop at a scale where the discontinuity is still smeared or attempt, in one step, a profile it has no means to express.  The conservation law itself sits at the end of this family as the singular limit $\varepsilon\to0$, where the layer has no width at all.

The idea of this work is to treat $\varepsilon$ as a continuation variable rather than a fixed constant, and thereby to approach the singular limit through the family of scales instead of jumping to it.  The {Jin--Xin relaxation gradual convergence method} (JXRGCM) trains a PINN on the Jin--Xin relaxation system and anneals $\varepsilon\to0$ along a schedule, warm-starting each stage from the parameters of the previous one, so that the trained solution follows the relaxation profile down through the scales and tracks the conservation-law solution as the limit sharpens (\cref{sec:method}).

The analysis rests on an $\varepsilon$-uniform stability estimate controlling the total error by the generalization error (\cref{thm:total-uniform}), obtained from a symmetrizer built on the sub-characteristic condition. By contrast, the direct estimate in \cref{thm:total-nonuniform} has a constant that deteriorates as $\varepsilon\to0$. This uniformity is what makes the continuation meaningful, since the approximation error does not deteriorate as the schedule descends through the scales. Combined with the relaxation limit (\cref{lem:relax-limit}), the estimate yields, for scalar conservation laws, an $L^2$ convergence rate of $\mathcal{O}(\varepsilon^{1/4})$ toward the entropy solution (\cref{thm:overall}); a quadrature estimate relates the training and generalization errors (\cref{thm:quadrature}). In \cref{sec:numerics}, the method is compared against the fixed-$\varepsilon$ RelaxNN \cite{ZhouMa2024}, an artificial-viscosity PINN \cite{VonNeumannRichtmyer1950}, the gradient-annihilating PINN (GA-PINN) \cite{ferrer2024gradient}, and the coupled-integral PINN (CI-PINN) \cite{WangYang2024}.

The remainder of the paper is organized accordingly: Section~\ref{sec:method} presents the relaxation formulation and continuation algorithm, Section~\ref{sec:theory} develops the analysis, Section~\ref{sec:numerics} reports the numerical experiments, and we conclude in Section~\ref{sec:conclusion}. \Cref{app:CE} contains the Chapman--Enskog derivation of the sub-characteristic condition.

%======================================================================
\section{The Jin--Xin relaxation gradual convergence method}\label{sec:method}

This section introduces the Jin--Xin relaxation formulation and the gradual continuation strategy underlying JXRGCM. The relaxation parameter $\varepsilon$ determines the scale of the internal layer, so decreasing $\varepsilon$ produces progressively sharper approximations of the limiting entropy solution. Rather than training directly at a very small value of $\varepsilon$, JXRGCM follows this hierarchy of scales by solving a sequence of relaxed problems and warm-starting each stage from the solution obtained at the previous one. We first describe the relaxed system and its PINN formulation, and then present the continuation algorithm.

\subsection{Relaxation formulation}\label{subsec:relaxation}
Introducing a relaxation variable $\mathbf{v}(t,x):\mathbb{R}^{+}\times\mathbb{R}\to\mathbb{R}^{n}$, we turn \cref{eq:cons-law} into the form of the Jin--Xin relaxation system \cite{JinXin1995}
\begin{equation}\label{eq:jinxin}
  \left\{
  \begin{aligned}
    &\partial_t \mathbf{u} + \partial_x \mathbf{v} = 0,\\
    &\varepsilon\bigl(\partial_t \mathbf{v} + \mathbf{A}^2 \partial_x \mathbf{u}\bigr)
      = \mathbf{f}(\mathbf{u}) - \mathbf{v},
  \end{aligned}
  \right.
\end{equation}
subject to the local equilibrium initial conditions
\begin{equation}\label{eq:jinxin-ic}
  \mathbf{u}(0,x) = \mathbf{u}_0(x) \quad \text{and} \quad \mathbf{v}(0,x) = \mathbf{f}\bigl(\mathbf{u}_0(x)\bigr),
\end{equation}
where $\varepsilon>0$ is the relaxation parameter and $\mathbf{A} = \operatorname{diag}\{a_1, a_2, \dots, a_n\}$ is a constant  positive diagonal matrix.
For fixed $\varepsilon$, the system is semilinear and hyperbolic; as $\varepsilon\to0$ the second equation enforces $\mathbf{v} \to \mathbf{f}(\mathbf{u})$ and \cref{eq:jinxin} formally reduces to \cref{eq:cons-law}.  A Chapman--Enskog expansion (\cref{app:CE}) yields the first-order modified equation
\begin{equation}\label{eq:chapman-enskog}
  \partial_t \mathbf{u} + \partial_x \mathbf{f}(\mathbf{u})
  = \varepsilon\,\partial_x\!\Bigl(\bigl(\mathbf{A}^2 - \mathbf{Df}(\mathbf{u})^2\bigr)\partial_x \mathbf{u}\Bigr)
    + \mathcal{O}(\varepsilon^2),
\end{equation}
so the leading correction is dissipative, and the limit consistent and stable, precisely under the \emph{sub-characteristic condition} \cite{ChenLevermoreLiu1994}: considering $\mathbf{A}^2 - \mathbf{Df}(\mathbf{u})$ with eigenvalues $\lambda_i(\mathbf{u})$,
\begin{equation}\label{eq:subchar}
  \lambda_i(\mathbf{u}) > 0 
\end{equation}
over the relevant range of states.

\subsection{PINN approximation and residuals}\label{subsec:pinn}
We approximate the conserved variable and relaxation flux by a single network with parameters $\theta$,
\begin{equation}\label{eq:net}
  (\mathbf{u}_{\theta},\mathbf{v}_{\theta}) = \Net(t,x;\theta).
\end{equation}
By automatic differentiation, the interior residuals of \cref{eq:jinxin} are
\begin{equation}\label{eq:residuals}
\begin{cases}
  \Res_{\mathbf{u}}[\theta] = \partial_t \mathbf{u}_{\theta} + \partial_x \mathbf{v}_{\theta}, \\
  \Res_{\mathbf{v}}[\theta] = \varepsilon\bigl(\partial_t \mathbf{v}_{\theta}
                    + \mathbf{A}^2 \partial_x \mathbf{u}_{\theta}\bigr)
                    - \bigl(\mathbf{f}(\mathbf{u}_{\theta}) - \mathbf{v}_{\theta}\bigr). 
\end{cases}
\end{equation}
The training objective combines a data term and a residual term:
\begin{equation}\label{eq:loss}
  \mathcal{L}(\theta)
  = \lambda_{\mathrm{data}}\,\mathcal{L}_{\mathrm{data}}(\theta)
  + \lambda_{\mathrm{pde}}\,\mathcal{L}_{\mathrm{pde}}(\theta),
\end{equation}
\begin{align}
  \mathcal{L}_{\mathrm{data}}(\theta)
    &= \MSE\bigl(\mathbf{u}_{\theta}(0,x),\mathbf{u}_0(x)\bigr)
     + \MSE\bigl(\mathbf{v}_{\theta}(0,x),\mathbf{f}(\mathbf{u}_0(x))\bigr),
       \label{eq:loss-data}\\
  \mathcal{L}_{\mathrm{pde}}(\theta)
    &= \MSE\bigl(\Res_{\mathbf{u}}[\theta],0\bigr)
     + \MSE\bigl(\Res_{\mathbf{v}}[\theta],0\bigr),
       \label{eq:loss-pde}
\end{align}
with $\lambda_{\mathrm{data}},\lambda_{\mathrm{pde}}>0$.

\subsection{Gradual convergence in \texorpdfstring{$\varepsilon$}{epsilon}}
\label{subsec:continuation}
By \cref{eq:chapman-enskog} the relaxation parameter $\varepsilon$ sets the effective diffusion $\varepsilon(\mathbf{A}^2 - \mathbf{Df}(\mathbf{u})^2)$ and hence the width of the internal layer that resolves a shock, so annealing $\varepsilon$ is a continuation in \emph{scale} rather than a training heuristic: each stage asks the network to represent a profile one step narrower than the one it already represents, and the conservation law is recovered only in the singular limit, where the layer width vanishes.  The idea is summarized by \cref{fig:diagram}. Let $\Res_\varepsilon$ denote the relaxation system \cref{eq:jinxin}, $\Res_0$ the conservation law \cref{eq:cons-law}, and $\mathcal{F}_\varepsilon,\mathcal{F}_0$ their solutions. A direct PINN takes the left-then-bottom path and fails, since $\mathcal{F}_0$ is discontinuous.  JXRGCM follows the top-then-right path: it trains a PINN to drive the relaxed loss to zero (top edge), then anneals $\varepsilon\to0$ (right edge) while fine-tuning the network so that $\mathcal{F}_\varepsilon$ tracks $\mathcal{F}_0$.  The diagram expresses the intended limiting relation between the two paths rather than a literally commuting square, since the direct bottom path trains on a strong-form residual that is ill-defined at a discontinuity; the sense in which the top-then-right path reaches $\mathcal{F}_0$ is made precise in \cref{thm:overall}.

\begin{figure}[htbp]
  \centering
  \begin{tikzcd}[column sep=3cm, row sep=2cm]
    \Res_\varepsilon
      \arrow[r, "{\text{loss}_\varepsilon \to 0}"]
      \arrow[d, "\varepsilon \to 0"']
    & \mathcal{F}_\varepsilon
      \arrow[d, "\varepsilon \to 0"] \\
    \Res_0
      \arrow[r, "{\text{loss} \to 0}"']
    & \mathcal{F}_0
  \end{tikzcd}
  \caption{Gradual-convergence strategy.  Rows correspond to the relaxation
    system $\Res_\varepsilon$ and the conservation law $\Res_0$, columns to their
    solutions $\mathcal{F}_\varepsilon$ and $\mathcal{F}_0$.  Training on the
    relaxed system (top) and then annealing $\varepsilon\to0$ (right) reaches the
    entropy solution $\mathcal{F}_0$, whereas the direct fit (bottom) trains on a
    strong-form residual that is ill-defined at a discontinuity, which is where
    standard PINNs fail.}
  \label{fig:diagram}
\end{figure}

\subsection{Algorithm}\label{subsec:algorithm}
\Cref{alg:jxrgcm} states the method.  The outer loop decreases $\varepsilon$ geometrically by a factor $r\in(0,1)$; each stage trains on the relaxation system at the current $\varepsilon$ with a stage tolerance $L_{\min}$ that tightens as $\varepsilon$ shrinks, and parameters are carried over between stages (warm start).  A final stage sets $\varepsilon=0$ to sharpen the solution near the limit.

\begin{algorithm}[htbp]
\caption{Jin--Xin relaxation gradual convergence method (JXRGCM)}
\label{alg:jxrgcm}
\begin{algorithmic}[1]
  \Require initial parameter $\varepsilon_0$; decay rate $r\in(0,1)$; base
    learning rate $\eta_0$; learning-rate factor $\rho$; base loss level $L$;
    weights $\lambda_{\mathrm{data}},\lambda_{\mathrm{pde}}$;
    termination parameter $\varepsilon^\ast$; maximum epochs
    $\mathrm{max\_epochs}$; final loss threshold $L_{\min}^{\mathrm{final}}$
  \State Initialize parameters $\theta$; set
    $\mathrm{first\_stage}\gets\mathrm{true}$, $\varepsilon\gets\varepsilon_0$
  \While{$\varepsilon \ge \varepsilon^\ast$}
    \State $\eta \gets \eta_0$ if $\mathrm{first\_stage}$, else
      $\eta \gets \eta_0\,\rho$
    \State $L_{\min} \gets L\cdot \varepsilon/\varepsilon_0$
    \State Sample initial data $\{(0,x_0^i,\mathbf{u}_0(x_0^i))\}_{i=1}^N$ and collocation
      points $\{(t_j,x_j)\}_{j=1}^M$
    \For{$\mathrm{epoch}=1$ to $\mathrm{max\_epochs}$}
      \For{each mini-batch $B$}
        \State Evaluate $(\mathbf{u}_{\theta},\mathbf{v}_{\theta})$ at data and collocation points
        \State Compute $\mathcal{L}_{\mathrm{data}}$, $\mathcal{L}_{\mathrm{pde}}$
          from \cref{eq:loss-data,eq:loss-pde}, then
          $\mathcal{L}=\lambda_{\mathrm{data}}\mathcal{L}_{\mathrm{data}}
            +\lambda_{\mathrm{pde}}\mathcal{L}_{\mathrm{pde}}$
        \State Update $\theta$ by a gradient step with learning rate $\eta$
      \EndFor
      \If{$\mathcal{L} < L_{\min}$} \State \textbf{break} \EndIf
    \EndFor
    \State $\mathrm{first\_stage}\gets\mathrm{false}$;\quad
      $\varepsilon \gets r\,\varepsilon$
  \EndWhile
  \State \textbf{Final refinement:} set $\varepsilon\gets0$,
    $L_{\min}\gets L_{\min}^{\mathrm{final}}$, $\eta\gets\eta_0\,\rho^2$, and
    train as above
  \State \Return $\theta$
\end{algorithmic}
\end{algorithm}

%======================================================================
\section{Theoretical analysis}\label{sec:theory}
In this section, we consider the vector relaxation system in symmetric form
\begin{equation}\label{eq:jinxin-sym}
  \begin{cases}
    \partial_t \mathbf{u} + \partial_x \mathbf{v} = 0, \\
    \varepsilon\bigl(\partial_t \mathbf{v} + \mathbf{A}^2\partial_x \mathbf{u}\bigr) = \mathbf{f}(\mathbf{u}) - \mathbf{v},
  \end{cases}
\end{equation}
with $\mathbf{u},\mathbf{v}:\mathbb{R}^+\times\mathbb{R}\to\mathbb{R}^{n}$, $\mathbf{f}\in C^\infty(\mathbb{R}^n)$, and $\mathbf{A}\in\mathbb{R}^{n\times n}$ symmetric positive definite, equipped with the equilibrium initial data
\begin{equation}\label{eq:equilibrium-ic}
\begin{cases}
  \mathbf{u}(0,x) = \mathbf{u}_0(x), \\
  \mathbf{v}(0,x) = \mathbf{f}\bigl(\mathbf{u}_0(x)\bigr),
\end{cases}
\end{equation}
where $\mathbf{u}_0\in(H^s(\mathbb{R}))^n$.

\subsection{Regularity of the Relaxation System}\label{subsec:regularity}
We first present several properties of the relaxation system.

\begin{proposition}[Local, Spacetime Regularity, and Uniform Regularity]\label{prop:regularity-uniform}
Let $s \ge 1$, $\mathbf{u}_0 \in (H^s(\mathbb{R}))^n$, and consider system \cref{eq:jinxin-sym} equipped with the equilibrium initial data \cref{eq:equilibrium-ic}.

\begin{enumerate}[label=(\roman*)]
  \item Local existence \cite{LiuYong2001}:\label{thm:local-existence} 
  For each fixed $\varepsilon>0$, there exists $T^\varepsilon>0$ such that \cref{eq:jinxin-sym,eq:equilibrium-ic} has a unique solution $(\mathbf{u},\mathbf{v})\in(C([0,T^\varepsilon];H^s(\mathbb{R})))^n\times(C([0,T^\varepsilon];H^s(\mathbb{R})))^n$. Note that $T^\varepsilon$ need not be bounded below as $\varepsilon\to0$.

  \item Spacetime regularity:\label{thm:spacetime} 
  If $s\in\mathbb{N}$, then $(\mathbf{u},\mathbf{v})\in(H^s([0,T^\varepsilon]\times\mathbb{R}))^n\times(H^s([0,T^\varepsilon]\times\mathbb{R}))^n$.

  \item Uniform-in-$\varepsilon$ existence \cite{Yong1993}:\label{thm:uniform-existence} 
  If the sub-characteristic condition \cref{eq:subchar} holds and $s\ge 3/2$, then there exist a time $T^\ast>0$ independent of $\varepsilon$, and a constant $C>0$ independent of $\varepsilon$, such that
  \begin{equation}\label{eq:uniform-bound}
    \sup_{t\in[0,T^\ast]}\bigl(\|\mathbf{u}(t,\cdot)\|_{H^s(\mathbb{R})}^2 
    + \|\mathbf{v}(t,\cdot)\|_{H^s(\mathbb{R})}^2\bigr) \le C.
  \end{equation}
\end{enumerate}
\end{proposition}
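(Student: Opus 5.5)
The proof has three parts, and the two existence statements are essentially citations once the system is put in the right form.

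\textbf{Part (i).} Write \cref{eq:jinxin-sym} as a first-order system for $\mathbf{w}=(\mathbf{u},\mathbf{v})$:
\begin{equation*}
\partial_t \mathbf{w} + \mathbf{B}\,\partial_x \mathbf{w} = \varepsilon^{-1}\mathbf{q}(\mathbf{w}),\qquad
\mathbf{B}=\begin{pmatrix} 0 & I\\ \mathbf{A}^2 & 0\end{pmatrix},\qquad
\mathbf{q}(\mathbf{w})=\begin{pmatrix}0\\ \mathbf{f}(\mathbf{u})-\mathbf{v}\end{pmatrix}.
\end{equation*}
Since $\mathbf{A}$ is symmetric positive definite, $\mathbf{B}$ is constant and symmetrizable by $\mathrm{diag}(\mathbf{A}^2,I)$. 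The principal part is therefore a constant-coefficient symmetric hyperbolic operator, and the lower-order term $\varepsilon^{-1}\mathbf{q}$ is smooth. For fixed $\varepsilon$, local well-posedness in $C([0,T^\varepsilon];H^s)$ then follows from Duhamel's formula and a contraction argument. This needs $\mathbf{f}(\mathbf{u})\in H^s$ with a local Lipschitz bound on bounded sets of $H^s$ (Moser estimates; if $\mathbf{f}(0)\neq0$, subtract $\mathbf{f}(0)$). This is the standard theory cited in \cite{LiuYong2001}. The Lipschitz constant carries the factor $1/\varepsilon$, so $T^\varepsilon\sim\varepsilon$ at worst, which explains the remark that $T^\varepsilon$ is not bounded below.

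\textbf{Part (ii).} Here $s\in\mathbb{N}$, and I argue by induction on the order of time derivatives. The equation gives
\begin{equation*}
\partial_t\mathbf{w}=-\mathbf{B}\partial_x\mathbf{w}+\varepsilon^{-1}\mathbf{q}(\mathbf{w})\in C([0,T^\varepsilon];H^{s-1}).
\end{equation*}
Differentiating the equation $k$ times in $t$ and using that $H^{s-k}$ is an algebra for $s-k\ge1$, with composition estimates for $\mathbf{q}$, gives
\begin{equation*}
\partial_t^k\mathbf{w}\in C([0,T^\varepsilon];H^{s-k}),\qquad k\le s.
\end{equation*}
In one space dimension this requires care only at the lowest orders, where $H^{1}$ and $L^2$ products are handled through the embedding $H^1\subset L^\infty$. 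Consequently every mixed derivative $\partial_t^k\partial_x^j\mathbf{w}$ with $k+j\le s$ lies in $L^2([0,T^\varepsilon]\times\mathbb{R})$, which is exactly the spacetime $H^s$ claim.

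\textbf{Part (iii).} This is the substantive statement, and I would reduce it to Yong's theory \cite{Yong1993} for hyperbolic relaxation systems $\partial_t\mathbf{w}+\mathbf{B}\partial_x\mathbf{w}=\varepsilon^{-1}\mathbf{q}(\mathbf{w})$. The plan is to verify his structural conditions:
\begin{itemize}
\item the equilibrium manifold $\{\mathbf{v}=\mathbf{f}(\mathbf{u})\}$, with $\mathbf{q}_{\mathbf{w}}$ having the right kernel/range splitting;
\item the dissipative (stability) condition: there is a symmetrizer $\mathbf{A}_0(\mathbf{w})$, positive definite, with $\mathbf{A}_0\mathbf{B}$ symmetric and $\mathbf{A}_0\mathbf{q}_{\mathbf{w}}$ negative semidefinite of maximal rank on the relevant states.
\end{itemize}
The natural candidate symmetrizer is
\begin{equation*}
\mathbf{A}_0=\begin{pmatrix}\mathbf{A}^2 & -\mathbf{Df}^T\\ -\mathbf{Df} & I\end{pmatrix},
\end{equation*}
suitably symmetrized. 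Its positivity is equivalent, by a Schur complement, to $\mathbf{A}^2-\mathbf{Df}^T\mathbf{Df}\succ0$, which is where the sub-characteristic condition \cref{eq:subchar} enters. The dissipative block is then $-\varepsilon^{-1}I$ on the $\mathbf{v}-\mathbf{f}(\mathbf{u})$ component.

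Because the initial data are in equilibrium, no initial layer arises. Energy estimates on $\partial_x^\alpha\mathbf{w}$ for $|\alpha|\le s$ with weight $\mathbf{A}_0$ then yield
\begin{equation*}
\frac{d}{dt}E_s+\frac{c}{\varepsilon}\bigl\|\mathbf{v}-\mathbf{f}(\mathbf{u})\bigr\|_{H^s}^2\le C(E_s)\,E_s .
\end{equation*}
The obstacle lies in the commutators between $\partial_x^\alpha$ and the nonlinear relaxation term, which carry a factor $1/\varepsilon$. They are absorbed by writing them in terms of $\mathbf{v}-\mathbf{f}(\mathbf{u})$ and using the dissipative term together with Young's inequality. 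Here $s>3/2$ is needed to control $\|\partial_x\mathbf{w}\|_{L^\infty}$ and hence the $\mathbf{w}$-dependence of $\mathbf{A}_0$. Grönwall's inequality then gives a time $T^\ast$ and a bound $C$ independent of $\varepsilon$, and continuation from (i) shows $T^\varepsilon\ge T^\ast$.

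In the scalar case the sub-characteristic condition reduces to $|f'|<a$. In the vector case I expect the symmetrizer step to be where the stated eigenvalue condition may have to be read as the stronger matrix inequality above. I would state this explicitly as the condition under which Yong's theorem applies.
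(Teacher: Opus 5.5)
Your parts (i) and (ii) agree with the paper. Part (i) is the cited Liu--Yong result, and your Duhamel/contraction sketch is the standard argument behind it. Your induction for (ii) trades each time derivative for space derivatives through the equations, which is the paper's argument, organized by time order instead of mixed order.

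For (iii) the paper, like you, just invokes Yong's theorem, so the reduction is the same. The trouble is that the two mechanisms you add to make it concrete do not work as written.

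\textbf{Commutators.} The $\varepsilon^{-1}$ commutators cannot be absorbed by the dissipation. In the unknowns $(\mathbf{u},\mathbf{v})$, applying $\partial_x^\alpha$ to the source gives $\varepsilon^{-1}\bigl(\mathbf{Df}(\mathbf{u})\,\partial_x^\alpha\mathbf{u}-\partial_x^\alpha\mathbf{v}+C_\alpha\bigr)$, where $C_\alpha=\partial_x^\alpha\mathbf{f}(\mathbf{u})-\mathbf{Df}(\mathbf{u})\,\partial_x^\alpha\mathbf{u}$. This $C_\alpha$ depends on $\mathbf{u}$ alone (e.g.\ $f''(u)u_x^2$ for $\alpha=2$), is $\mathcal{O}(1)$, and cannot be expressed through $\mathbf{v}-\mathbf{f}(\mathbf{u})$. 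Young's inequality against the dissipative term therefore leaves a term of size $\varepsilon^{-1}\|C_\alpha\|_{L^2}^2$, and the estimate is not uniform.

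What works is to take $\mathbf{z}=\mathbf{v}-\mathbf{f}(\mathbf{u})$ as an unknown from the start:
\[
\partial_t\mathbf{u}+\mathbf{Df}\,\partial_x\mathbf{u}+\partial_x\mathbf{z}=0,\qquad
\partial_t\mathbf{z}-\mathbf{Df}\,\partial_x\mathbf{z}+\bigl(\mathbf{A}^2-\mathbf{Df}^2\bigr)\partial_x\mathbf{u}=-\varepsilon^{-1}\mathbf{z}.
\]
In these variables:
\begin{itemize}
\item the stiff term is linear with constant coefficient, so it commutes with $\partial_x^\alpha$;
\item all commutators come from the quasilinear principal part and are $\varepsilon$-independent;
\item $\partial_t\mathbf{u}=-\partial_x\mathbf{v}$ carries no $\varepsilon^{-1}$;
\item $\mathbf{z}(0)=0$ by the equilibrium data.
\end{itemize}

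\textbf{Symmetrizer.} Your matrix $\begin{pmatrix}\mathbf{A}^2&-\mathbf{Df}^T\\-\mathbf{Df}&I\end{pmatrix}$ does make the linearized source dissipative. However, it symmetrizes $\mathbf{B}$ only when $\mathbf{Df}$ is symmetric and commutes with $\mathbf{A}^2$, which fails for the shallow-water and Euler Jacobians. ``Suitably symmetrized'' has to mean weighting by an entropy Hessian $\mathbf{H}$ of the conservation law (with $\mathbf{H}\mathbf{Df}$ symmetric); this is the convex-entropy structure the paper alludes to. For $\mathbf{A}=a\mathbf{I}$, the block weight $\operatorname{diag}\bigl(a^2\mathbf{H}-\mathbf{Df}^T\mathbf{H}\mathbf{Df},\,\mathbf{H}\bigr)$:
\begin{itemize}
\item symmetrizes the $(\mathbf{u},\mathbf{z})$ system above;
\item turns the source into the dissipation $-\varepsilon^{-1}\langle\mathbf{z},\mathbf{H}\mathbf{z}\rangle$;
\item is positive definite exactly when all eigenvalues of $\mathbf{Df}$ lie in $(-a,a)$.
\end{itemize}
So your norm condition $\mathbf{A}^2-\mathbf{Df}^T\mathbf{Df}\succ0$ is stronger than necessary.

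Your suspicion about \cref{eq:subchar} is nonetheless justified. As printed (positive eigenvalues of $\mathbf{A}^2-\mathbf{Df}$), it does not give $|f'|<a$ even for scalar laws, yet that is what \cref{thm:total-uniform} and \cref{app:CE} actually use. The intended condition is on $\mathbf{A}^2-\mathbf{Df}^2$.

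Finally, your argument requires $s>3/2$, in line with \cref{rem:multi-d-regularity}; the endpoint $s=3/2$ allowed in the statement is not covered.
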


\begin{remark}\label{rem:multi-d-regularity}
Regarding item~(iii) in Proposition~\ref{prop:regularity-uniform}, for $d$ spatial dimensions ($x \in \mathbb{R}^d$), the corresponding Sobolev regularity requirement for the uniform estimate \eqref{eq:uniform-bound} is $s > 1 + d/2$.
\end{remark}

\begin{proof}[Proof of Proposition~\ref{prop:regularity-uniform}(ii)]
We show the case $s=1$ and indicate the induction for $s \ge 2$. By \cref{thm:local-existence}, $\mathbf{u},\mathbf{v}\in C([0,T^\varepsilon];H^1(\mathbb{R})) \hookrightarrow C([0,T^\varepsilon];L^\infty(\mathbb{R}))$. Since $\mathbf{f}\in C^\infty$, we have $\mathbf{f}(\mathbf{u})-\mathbf{v}\in C([0,T^\varepsilon];L^2(\mathbb{R}))$. Using $\partial_t\mathbf{u}=-\partial_x\mathbf{v}$, we obtain
\begin{equation*}
  \begin{aligned}
    \|\mathbf{u}\|_{H^1([0,T^\varepsilon]\times\mathbb{R})}^2
    &= \int_0^{T^\varepsilon}\!\bigl(\|\mathbf{u}(t,\cdot)\|_{L^2(\mathbb{R})}^2
      +\|\partial_x\mathbf{v}(t,\cdot)\|_{L^2(\mathbb{R})}^2
      +\|\partial_x\mathbf{u}(t,\cdot)\|_{L^2(\mathbb{R})}^2\bigr)\,\mathrm{d}t \\
    &< \infty,
  \end{aligned}
\end{equation*}
and using $\partial_t\mathbf{v}=-\mathbf{A}^2\partial_x\mathbf{u}+\frac{1}{\varepsilon}(\mathbf{f}(\mathbf{u})-\mathbf{v})$,
\begin{equation*}
  \begin{aligned}
    \|\mathbf{v}\|_{H^1([0,T^\varepsilon]\times\mathbb{R})}^2
    &\le T^\varepsilon\!\sup_{t\in[0,T^\varepsilon]}\!\Bigl(
      \|\mathbf{v}(t,\cdot)\|_{L^2(\mathbb{R})}^2+\|\partial_x\mathbf{v}(t,\cdot)\|_{L^2(\mathbb{R})}^2
      +\|\mathbf{A}\|^2\|\partial_x\mathbf{u}(t,\cdot)\|_{L^2(\mathbb{R})}^2 \\
    &\qquad\qquad\qquad +\frac{1}{\varepsilon^2}\|\mathbf{f}(\mathbf{u}(t,\cdot))-\mathbf{v}(t,\cdot)\|_{L^2(\mathbb{R})}^2\Bigr) < \infty.
  \end{aligned}
\end{equation*}
For $s\ge2$, the spatial derivatives satisfy $\partial_x^p\mathbf{u},\partial_x^p\mathbf{v}\in C([0,T^\varepsilon];H^{s-p}(\mathbb{R}))$ for $p\le s$. Since $H^s(\mathbb{R})$ is an algebra, an induction on the mixed order $\alpha_1+\alpha_2$ converts each time derivative to spatial derivatives via \cref{eq:jinxin-sym} (e.g., $\partial_t\partial_t^{\alpha_1}\partial_x^{\alpha_2-1}\mathbf{u} =-\partial_x\partial_t^{\alpha_1}\partial_x^{\alpha_2-1}\mathbf{v}$, and similarly for $\mathbf{v}$ with the chain rule controlling $\partial^\alpha(\mathbf{f}(\mathbf{u})-\mathbf{v})$), which gives $\partial_t^{\alpha_1}\partial_x^{\alpha_2}\mathbf{u}, \partial_t^{\alpha_1}\partial_x^{\alpha_2}\mathbf{v}\in C([0,T^\varepsilon];H^{s-\alpha_1-\alpha_2}(\mathbb{R}))$.
\end{proof}
The uniform lower bound on $T^\ast$ rests on the structural stability of the relaxation \cite{Yong1993,ChenLevermoreLiu1994}: the sub-characteristic condition \cref{eq:subchar} ensures that \cref{eq:jinxin-sym} admits a strictly convex entropy and a symmetrizer compatible with the relaxation operator (the matrix $\bm{A}_0$ of \cref{thm:total-uniform} is the linearized instance), under which the stiff source is dissipative.  The associated entropy dissipation yields $H^s$ energy estimates whose constants are independent of $\varepsilon$ for $s\ge 1+d/2$; these in turn close the local existence argument on a time interval that does not shrink as $\varepsilon\to0$.  Without \cref{eq:subchar}, $T^\varepsilon$ may collapse, which is why \cref{thm:local-existence} alone gives only an $\varepsilon$-dependent time.

\subsection{Generalization error: approximation theory}\label{subsec:gen}
For a network $(\mathbf{u}_{\theta},\mathbf{v}_{\theta})$, define the interior residuals
\begin{equation}\label{eq:residuals-sym}
\begin{cases}
  \Res_{i,\mathbf{u}}[\theta] = \partial_t \mathbf{u}_{\theta} + \partial_x \mathbf{v}_{\theta}, \\
  \Res_{i,\mathbf{v}}[\theta] = \partial_t \mathbf{v}_{\theta} + \mathbf{A}^2 \partial_x \mathbf{u}_{\theta} - \frac{1}{\varepsilon}\bigl(\mathbf{f}(\mathbf{u}_{\theta}) - \mathbf{v}_{\theta}\bigr), 
\end{cases}
\end{equation}
the initial residuals
\begin{equation}
\begin{cases}
  \Res_{t,\mathbf{u}}[\theta] = \mathbf{u}_{\theta}(0,\cdot) - \mathbf{u}_0, \\
  \Res_{t,\mathbf{v}}[\theta] = \mathbf{v}_{\theta}(0,\cdot) - \mathbf{v}_0, 
\end{cases}
\end{equation}
and the generalization error
\begin{equation}\label{eq:gen-error}
  \mathcal{E}_G[\theta]^2 = \mathcal{E}_G^i[\theta]^2 + \mathcal{E}_G^t[\theta]^2 + \lambda\,\mathcal{E}_G^p[\theta]^2,
\end{equation}
where
\begin{equation*}
\begin{aligned}
  \mathcal{E}_G^i[\theta]^2 &= \int_0^T\!\bigl(\|\Res_{i,\mathbf{u}}[\theta](t,\cdot)\|_{L^2(\mathbb{R})}^2
    + \|\Res_{i,\mathbf{v}}[\theta](t,\cdot)\|_{L^2(\mathbb{R})}^2\bigr)\,\mathrm{d}t, \\
  \mathcal{E}_G^t[\theta]^2 &= \|\Res_{t,\mathbf{u}}[\theta]\|_{L^2(\mathbb{R})}^2
    + \|\Res_{t,\mathbf{v}}[\theta]\|_{L^2(\mathbb{R})}^2, \\
  \mathcal{E}_G^p[\theta]^2 &= \int_0^T\!\bigl(\|\mathbf{u}_{\theta}(t,\cdot)\|_{H^1(\mathbb{R})}^2
    + \|\mathbf{v}_{\theta}(t,\cdot)\|_{H^1(\mathbb{R})}^2\bigr)\,\mathrm{d}t,
\end{aligned}
\end{equation*}
the training error
\begin{equation}\label{eq:train-error}
  \mathcal{E}_T[\theta;\mathcal{S}]^2 = \mathcal{E}_T^i[\theta;\mathcal{S}_i]^2
    + \mathcal{E}_T^t[\theta;\mathcal{S}_t]^2 + \lambda\,\mathcal{E}_T^p[\theta;\mathcal{S}_i]^2,
\end{equation}
where
\begin{equation*}
\begin{aligned}
  \mathcal{E}_T^i[\theta; \mathcal{S}_i]^2 &:= \sum_{(t_n,x_n)\in\mathcal{S}_i} w_n^i \left[ \bigl(\Res_{i,\mathbf{u}}[\theta](t_n,x_n)\bigr)^2 + \bigl(\Res_{i,\mathbf{v}}[\theta](t_n,x_n)\bigr)^2 \right], \\
  \mathcal{E}_T^t[\theta; \mathcal{S}_t]^2 &:= \sum_{x_n\in\mathcal{S}_t} w_n^t \left[ \bigl(\Res_{t,\mathbf{u}}[\theta](x_n)\bigr)^2 + \bigl(\Res_{t,\mathbf{v}}[\theta](x_n)\bigr)^2 \right], \\
  \mathcal{E}_T^p[\theta; \mathcal{S}_i]^2 &:= \sum_{(t_n,x_n)\in\mathcal{S}_i} \sum_{|\alpha|\le 1} w_n^i \left[ \bigl(D^\alpha \mathbf{u}_{\theta}(t_n,x_n)\bigr)^2 + \bigl(D^\alpha \mathbf{v}_{\theta}(t_n,x_n)\bigr)^2 \right],
\end{aligned}
\end{equation*}
and the total error
\begin{equation}\label{eq:total-error}
  \mathcal{E}[\theta]^2 = \int_0^T\!\bigl(\|\mathbf{u}(t,\cdot) - \mathbf{u}_{\theta}(t,\cdot)\|_{L^2(\mathbb{R})}^2
    + \|\mathbf{v}(t,\cdot) - \mathbf{v}_{\theta}(t,\cdot)\|_{L^2(\mathbb{R})}^2\bigr)\,\mathrm{d}t.
\end{equation}

Here, $\lambda>0$ weights the $H^1$ regularization term, and $\mathcal{E}_T^i, \mathcal{E}_T^t, \mathcal{E}_T^p$ are the midpoint-quadrature counterparts (\cref{subsec:quadrature}). The spatiotemporal quadrature points form the dataset $\mathcal{S} = (\mathcal{S}_i, \mathcal{S}_t)$, where $\mathcal{S}_i \subseteq [0,T] \times \mathbb{R}$ and $\mathcal{S}_t \subseteq \mathbb{R}$, with $(w_n^i, w_n^t)$ being the corresponding quadrature weights. We use a Sobolev approximation result for $\tanh$ networks.

\begin{lemma}[$\tanh$ approximation in Sobolev norm \cite{DeRyckNS2024}]
\label{lem:tanh}
Let $d,n\ge2$, $m\ge3$, and $\Omega=\prod_{i=1}^d[a_i,b_i]$.  For $f\in H^m(\Omega)$ and any integer $N>5$ there is a two-hidden-layer $\tanh$ network $\widehat f_N$ such that, for all $k\in\{0,\dots,m-1\}$,
\begin{equation}\label{eq:tanh-rate}
  \norm{f-\widehat f_N}{H^k(\Omega)}
  \le C_{k,m,d,f,\Omega}\,(1+\ln^k N)\,N^{-m+k},
\end{equation}
with layer widths polynomial in $N$ and weights of magnitude $\mathcal{O}(N\ln N+N^\gamma)$.
\end{lemma}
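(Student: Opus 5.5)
This lemma is quoted from \cite{DeRyckNS2024}. The plan is to reproduce the structure of that argument rather than invent a new one. It is a constructive approximation in three stages.

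\textbf{Step 1: local polynomial approximation.} Partition $\Omega$ into $N^d$ congruent subcubes of side $\sim N^{-1}$. On each subcube $\Omega_j$, take the averaged Taylor polynomial $p_j$ of $f$ of degree $m-1$. The Bramble--Hilbert lemma then gives the local bound
\begin{equation*}
\|f-p_j\|_{H^k(\Omega_j)}\lesssim N^{-(m-k)}|f|_{H^m(\Omega_j)}.
\end{equation*}
Summing over $j$ yields the global rate $N^{-m+k}$ for the piecewise polynomial. This step is routine.

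\textbf{Step 2: emulate monomials with tanh.} I will use the first hidden layer of $\tanh$ neurons to approximate all monomials $x^\alpha$ with $|\alpha|\le m-1$. The tool is finite differences of $\tanh(h\,\cdot)$ at small shifts, which recover odd powers. Even powers follow from the polarization identity $xy=\tfrac14((x+y)^2-(x-y)^2)$ together with a shifted expansion. These approximations are accurate in $W^{k,\infty}$ with error controlled by the step $h$. Choosing $h$ polynomially small in $N$ keeps the monomial error below $N^{-m+k}$, at the cost of weights of size $N^\gamma$.

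\textbf{Step 3: smooth partition of unity.} Glue the local polynomials with an approximate partition of unity built from differences $\tfrac12(\tanh(\kappa(x-a))-\tanh(\kappa(x-b)))$ with steepness $\kappa\sim N\ln N$. Take products over the coordinates, realized in the second layer through the same multiplication gadget. The network is then $\widehat f_N=\sum_j \Phi_j\, p_j$.

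\textbf{Step 4: error estimate.} Expand the error with the Leibniz rule:
\begin{equation*}
\|f-\widehat f_N\|_{H^k}\le\Big\|\sum_j\Phi_j(f-p_j)\Big\|_{H^k}+\text{(emulation errors)}.
\end{equation*}
Each derivative falling on $\Phi_j$ costs a factor $\kappa\sim N\ln N$. Each such factor is offset by using a Bramble--Hilbert estimate that is one order lower on a slightly enlarged patch. This produces the $(1+\ln^k N)N^{-m+k}$ rate. The tails of $\Phi_j$ decay like $e^{-\kappa N^{-1}}$, which is made negligible by the logarithmic factor in $\kappa$.

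\textbf{Main obstacle.} I expect the hard step to be the product and partition-of-unity bookkeeping. One has to verify that the approximate partition of unity sums to $1$ up to an error that is small in $H^k$, not merely in $L^\infty$. One also has to track how the $\tanh$ weights, which are $\mathcal{O}(N\ln N+N^\gamma)$, and the widths, polynomial in $N$, propagate through the multiplication gadgets. The restrictions $m\ge3$ and $N>5$ enter here to make the logarithmic factors and the finite-difference steps admissible.
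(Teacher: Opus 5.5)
Your outline is correct and is essentially the construction behind the cited De~Ryck et al.\ result; the paper itself quotes the lemma without proof. The ingredients match: Bramble--Hilbert local polynomials on cells of side $\sim N^{-1}$, finite-difference $\tanh$ monomials in the first hidden layer, and a $\tanh$ partition of unity of steepness $\kappa\sim N\ln N$, with the products $\Phi_j p_j$ formed in the second layer. The Leibniz bookkeeping also matches: each factor $\kappa$ is paid for by one extra power of $N^{-1}$ from Bramble--Hilbert on the neighbouring cells, leaving $\ln^k N$.

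On the obstacle you flag, one placement detail matters. If the outermost steps sit on $\partial\Omega$, the telescoped sum is only about $\tfrac12$ there, which ruins the rate. Either replace them by the constants $1$ and $0$, so that $\sum_j\Phi_j\equiv1$ exactly and your splitting based on $f=\sum_j\Phi_j f$ holds verbatim, or move them a distance $\gtrsim N^{-1}$ outside $\Omega$.
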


\begin{theorem}[Generalization-error bound]\label{thm:gen-error}
Assume the exact solution of \cref{eq:jinxin-sym} is supported in $[-R,R]$ for all $t\in[0,T]$, set $\Omega=[0,T]\times[-R,R]$, let $\mathbf{f}$ be Lipschitz on its range with constant $L$, and $\lambda=\mathcal{O}(\delta^2)$. Then there is a two-hidden-layer $\tanh$ network $(\mathbf{u}_{\theta},\mathbf{v}_{\theta})$ with
\begin{equation}\label{eq:gen-rate}
  \mathcal{E}_G[\theta] \le
  \begin{cases}
    C\delta, & s=1,\\
    C\,(1+\ln N)\,N^{-s+1}+C\delta, & s\ge2,\ s\in\mathbb{N},
  \end{cases}
\end{equation}
where $\delta>0$ is arbitrary, $N$ controls the width, and $C=C(\mathbf{A},L,\varepsilon,\Omega,\mathbf{u},\mathbf{v})$ is independent of the network size.
\end{theorem}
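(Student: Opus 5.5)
The plan is to build the network componentwise from \cref{lem:tanh} and then bound each of the three pieces of $\EG$ in \cref{eq:gen-error} separately, working on $\Omega=[0,T]\times[-R,R]$ with $d=2$. Since the exact solution is supported in $[-R,R]$, the integrals over $\mathbb{R}$ restrict to $[-R,R]$. This is a modelling simplification: outside $[-R,R]$ the network is not forced to vanish, and I will treat the exterior contribution as part of what the assumption excludes. By \cref{prop:regularity-uniform}\cref{thm:spacetime} we have $(\mathbf{u},\mathbf{v})\in H^s(\Omega)$ componentwise whenever $s\in\mathbb{N}$.

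For $s\ge2$ I will apply \cref{lem:tanh} to each component of $\mathbf{u}$ and $\mathbf{v}$ with $m=s$ and $k=1$, and stack the $2n$ resulting networks into one two-hidden-layer network. The lemma nominally requires $m\ge3$; for $s=2$ I would either invoke the $H^1$ version from \cite{DeRyckNS2024}, which holds under the same construction, or absorb this case into the $\delta$-branch argument below. This gives
\[
\|\mathbf{u}-\mathbf{u}_\theta\|_{H^1(\Omega)}+\|\mathbf{v}-\mathbf{v}_\theta\|_{H^1(\Omega)}\le C(1+\ln N)N^{-s+1}.
\]
The three error terms are then handled as follows.

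\textbf{Interior residual.} Subtract the exact equations from the residuals in \cref{eq:residuals-sym}:
\[
\Res_{i,\mathbf{u}}=\partial_t(\mathbf{u}_\theta-\mathbf{u})+\partial_x(\mathbf{v}_\theta-\mathbf{v}),
\]
\[
\Res_{i,\mathbf{v}}=\partial_t(\mathbf{v}_\theta-\mathbf{v})+\mathbf{A}^2\partial_x(\mathbf{u}_\theta-\mathbf{u})-\varepsilon^{-1}\bigl[(\mathbf{f}(\mathbf{u}_\theta)-\mathbf{f}(\mathbf{u}))-(\mathbf{v}_\theta-\mathbf{v})\bigr].
\]
Applying the Lipschitz bound on $\mathbf{f}$ gives $\EG^i\le C(\mathbf{A},L,\varepsilon)\bigl(\|\mathbf{u}-\mathbf{u}_\theta\|_{H^1(\Omega)}+\|\mathbf{v}-\mathbf{v}_\theta\|_{H^1(\Omega)}\bigr)$, with $C\sim 1+\|\mathbf{A}\|^2+(1+L)/\varepsilon$. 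This is why $C$ may depend on $\varepsilon$.

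\textbf{Initial residual.} Use the trace inequality $\|g(0,\cdot)\|_{L^2}\le C_\Omega\|g\|_{H^1(\Omega)}$ to bound $\EG^t$ by the same $H^1$ error.

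\textbf{Penalty term.} Bound $\lambda(\EG^p)^2\le\lambda\bigl(\|\mathbf{u}_\theta\|_{H^1}+\|\mathbf{v}_\theta\|_{H^1}\bigr)^2\le\lambda\bigl(\|\mathbf{u}\|_{H^1}+\|\mathbf{v}\|_{H^1}+1\bigr)^2$. With $\lambda=\mathcal{O}(\delta^2)$ this is $\le C\delta^2$.

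Summing the three bounds gives the $s\ge2$ branch of \cref{eq:gen-rate}.

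For $s=1$, \cref{lem:tanh} yields no rate at $k=1$, since it requires $k\le m-1$. I will instead use density: smooth functions are dense in $H^1(\Omega)$, so first pick a mollified $(\tilde{\mathbf{u}},\tilde{\mathbf{v}})\in H^3(\Omega)$ within $\delta$ of $(\mathbf{u},\mathbf{v})$ in $H^1$. Then apply \cref{lem:tanh} to $(\tilde{\mathbf{u}},\tilde{\mathbf{v}})$ with $N$ large enough that its error is also below $\delta$. The same three estimates then give $\EG\le C\delta$, with $C$ independent of network size because the smoothing level is absorbed into the choice of $N$.

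\textbf{Main obstacle.} The delicate step is the nonlinear source term. The Lipschitz constant must hold on a set containing the ranges of both $\mathbf{u}$ and $\mathbf{u}_\theta$, and $\varepsilon^{-1}$ enters the constant. To handle the range, I will use the embedding $H^2(\Omega)\hookrightarrow L^\infty$ for $s\ge2$, or alternatively bound the $\tanh$ network uniformly, so that $\mathbf{u}_\theta$ stays in a fixed neighbourhood of the range of $\mathbf{u}$. I will then interpret $L$ as the Lipschitz constant on that neighbourhood. For $s=1$ no such $L^\infty$ control is available, so I will rely on the global-Lipschitz reading of the hypothesis.
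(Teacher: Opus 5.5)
Your proposal follows essentially the same route as the paper's proof. Both approximate $(\mathbf{u},\mathbf{v})$ componentwise by $\tanh$ networks in $H^1(\Omega)$: \cref{lem:tanh} with $m=s$, $k=1$ for $s\ge2$, and a density argument for $s=1$. Then the interior residual is bounded by the Lipschitz estimate with the $\varepsilon^{-1}$ factor absorbed into $C$, the initial residual by the trace theorem, and the penalty term by boundedness of the network together with $\lambda=\mathcal{O}(\delta^2)$. For $s=1$, your mollify-then-apply-\cref{lem:tanh} step is if anything more careful than the paper's appeal to $C_c^\infty(\Omega)$, which is dense only in $H^1_0(\Omega)$.

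The caveats you raise are passed over just as silently in the paper's proof, so they are not gaps relative to it. These are the hypothesis $m\ge3$ of \cref{lem:tanh} when $s=2$, the uncontrolled network outside $[-R,R]$, and the set on which $\mathbf{f}$ must be Lipschitz. One small correction to your fix for the Lipschitz range: $H^2(\Omega)\hookrightarrow L^\infty$ bounds $\mathbf{u}$ but not $\mathbf{u}_\theta$, since you only have $H^1$ closeness. To keep $\mathbf{u}_\theta$ uniformly near the range of $\mathbf{u}$, you would instead apply \cref{lem:tanh} with $k=2$. That is available for $s\ge3$, and also in your mollified $s=1$ route with $m=3$.
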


\begin{proof}
By \cref{thm:spacetime}, $(\mathbf{u},\mathbf{v})\in(H^s([0,T]\times\mathbb{R}))^n\times(H^s([0,T]\times\mathbb{R}))^n$.

For $s=1$, two-layer $\tanh$ networks are dense in $C_c^\infty(\Omega)$, which is dense in $H^1(\Omega)$ \cite{Pinkus1999}. So for any $\delta>0$, there exists $(\mathbf{u}_{\theta},\mathbf{v}_{\theta})$ with $\|\mathbf{u}-\mathbf{u}_{\theta}\|_{H^1}^2\le\delta$ and $\|\mathbf{v}-\mathbf{v}_{\theta}\|_{H^1}^2\le\delta$.

For $s\ge2$, \cref{lem:tanh} with $m=s$, $k=1$, $d=2$ gives $\|\mathbf{u}-\mathbf{u}_{\theta}\|_{H^1},\|\mathbf{v}-\mathbf{v}_{\theta}\|_{H^1} \le C(1+\ln N)N^{-s+1}$.

Set 
\begin{equation*}
  \kappa =
  \begin{cases}
     \delta , & s=1, \\
     C^2(1+\ln N)^2N^{-2s+2} , & s\ge2.
  \end{cases}
\end{equation*}
Thus, $\|\mathbf{u}-\mathbf{u}_{\theta}\|_{H^1}^2,\|\mathbf{v}-\mathbf{v}_{\theta}\|_{H^1}^2\le\kappa$. 

The trace theorem gives $\mathcal{E}_G^t[\theta]^2 \le D\kappa$. For the interior term, expanding $\mathcal{E}_G^i[\theta]^2$ and using $\|\mathbf{f}(\mathbf{u}_{\theta})-\mathbf{f}(\mathbf{u})\|_{L^2}\le L\|\mathbf{u}_{\theta}-\mathbf{u}\|_{L^2}$,
\begin{equation*}
\begin{aligned}
  \mathcal{E}_G^i[\theta]^2
  &\le \Bigl(2+4\|\mathbf{A}\|^4+\tfrac{4L^2}{\varepsilon^2}\Bigr)
      \|\mathbf{u}_{\theta}-\mathbf{u}\|_{H^1}^2
    +\Bigl(6+\tfrac{4}{\varepsilon^2}\Bigr)\|\mathbf{v}_{\theta}-\mathbf{v}\|_{H^1}^2 \\
  &\le \Bigl(8+4\|\mathbf{A}\|^4+\tfrac{4L^2+4}{\varepsilon^2}\Bigr)\kappa .
\end{aligned}
\end{equation*}
Hence $\mathcal{E}_G[\theta]^2\le\bigl(8+4\|\mathbf{A}\|^4+(4L^2+4)/\varepsilon^2 + D \bigr)\kappa + \lambda \mathcal{E}_G^p[\theta]^2$. With the boundedness of the neural network, taking square roots gives \cref{eq:gen-rate}.
\end{proof}

\begin{remark}\label{rem:compact-support}
When neural networks fit functions, polynomial convergence rates in Sobolev norm typically require stronger regularity of the target function. The approximation rate is governed by the regularity of the exact solution: $H^1$ data yield only a qualitative (density) result, whereas $H^s$ data with $s\ge2$ give the algebraic rate $\mathcal{O}(N^{-s+1}\ln N)$. Compact support is natural: data are prescribed in $H^s$; PINNs require truncation of non-compact data; and finite-speed propagation keeps compactly supported data compactly supported on a finite time horizon. We stress that \cref{thm:gen-error} is an \emph{existence} statement: it asserts that some two-hidden-layer $\tanh$ network attains the stated generalization error, and places no constraint on the architecture used in practice. The networks of \cref{sec:numerics} are deeper (\cref{tab:perproblem}) and at least as expressive in practice, although \cref{thm:gen-error} is proved only for the specific two-hidden-layer construction of \cref{lem:tanh}; the gap between attainable and attained error is a question of optimization rather than approximation.
\end{remark}

\begin{remark}\label{rem:eps-dependence}
Constant $C=C(\mathbf{A},L,\varepsilon,\Omega,\mathbf{u},\mathbf{v})$ in \cref{thm:gen-error} depends on $\varepsilon$ through the stiff term: the interior estimate carries a factor $8+4\|\mathbf{A}\|^4+(4L^2+4)/\varepsilon^2$, so for a \emph{fixed} network size the guaranteed generalization error grows like $\varepsilon^{-1}$ as $\varepsilon\to0$. This reflects a genuine fact (approximating the relaxation system at small $\varepsilon$ is harder, since its solution develops $\mathcal{O}(\varepsilon)$ internal layers) and is precisely the difficulty that the continuation strategy is designed to manage: rather than training at a small $\varepsilon$ from scratch, JXRGCM decreases $\varepsilon$ gradually and warm-starts each stage (\cref{alg:jxrgcm}), so the network need only track an incrementally sharper profile. Two points keep this from degrading the final result. First, the rate \cref{eq:gen-rate} shows the error can still be driven to zero at any fixed $\varepsilon$ by increasing the width $N$. Second, and more importantly, the constant $\widetilde C_T$ that propagates the generalization error to the total error in \cref{thm:total-uniform} is \emph{$\varepsilon$-uniform}, so the achieved total error does not blow up along the schedule. In other words, the $\varepsilon$-dependence here bounds the training \emph{effort} required at small $\varepsilon$, not the stability of the error once a target tolerance is met.
\end{remark}

\subsection{From generalization error to total error}\label{subsec:total}
Two important estimates will be obtained in this subsection. The first is simpler but its constant degrades as $\varepsilon\to0$; the second is uniform in $\varepsilon$ under the sub-characteristic condition.

\begin{theorem}\label{thm:total-nonuniform}
Assume the exact solution has compact support and $\mathbf{f}$ is Lipschitz with constant $L$. There is $\widetilde C_{T,\varepsilon}>0$ depending on $T,\mathbf{A},\varepsilon,L$ with $\mathcal{E}[\theta]\le \widetilde C_{T,\varepsilon} \,\mathcal{E}_G[\theta]$.
\end{theorem}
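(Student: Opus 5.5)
We argue by a standard $L^2$ energy estimate on the error system, in the spirit of \cite{DeRyckMishra2024}. Set $\hat{\mathbf{u}}=\mathbf{u}_\theta-\mathbf{u}$ and $\hat{\mathbf{v}}=\mathbf{v}_\theta-\mathbf{v}$. Subtracting \cref{eq:jinxin-sym} from the residual definitions \cref{eq:residuals-sym} gives the error equations
\begin{equation*}
\partial_t\hat{\mathbf{u}}+\partial_x\hat{\mathbf{v}}=\Res_{i,\mathbf{u}},\qquad
\partial_t\hat{\mathbf{v}}+\mathbf{A}^2\partial_x\hat{\mathbf{u}}=\tfrac1\varepsilon\bigl(\mathbf{f}(\mathbf{u}_\theta)-\mathbf{f}(\mathbf{u})\bigr)-\tfrac1\varepsilon\hat{\mathbf{v}}+\Res_{i,\mathbf{v}},
\end{equation*}
with initial data $\hat{\mathbf{u}}(0)=\Res_{t,\mathbf{u}}$ and $\hat{\mathbf{v}}(0)=\Res_{t,\mathbf{v}}$. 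To cancel the hyperbolic part we use the weighted energy $E(t)=\int_{\mathbb{R}}\bigl(\hat{\mathbf{u}}^\top\mathbf{A}^2\hat{\mathbf{u}}+|\hat{\mathbf{v}}|^2\bigr)\,\mathrm{d}x$, which is equivalent to the unweighted $L^2$ norm since $\mathbf{A}$ is symmetric positive definite. With this weight, $\mathbf{A}^2$ symmetrizes the principal part: the cross terms $\int\hat{\mathbf{u}}^\top\mathbf{A}^2\partial_x\hat{\mathbf{v}}+\hat{\mathbf{v}}^\top\mathbf{A}^2\partial_x\hat{\mathbf{u}}\,\mathrm{d}x=\int\partial_x(\hat{\mathbf{u}}^\top\mathbf{A}^2\hat{\mathbf{v}})\,\mathrm{d}x$ form a total derivative.

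\textbf{Step 1: the flux term.} Differentiate $E$ in time and insert the error equations. The flux integral must vanish, which requires decay at infinity or compact support. Compact support of the exact solution is assumed, but the network is not compactly supported. We handle this with the $H^1$ penalty term $\mathcal{E}_G^p$: $\hat{\mathbf{u}}(t,\cdot),\hat{\mathbf{v}}(t,\cdot)\in H^1(\mathbb{R})$ for a.e.\ $t$, so they vanish at $\pm\infty$ and the boundary term drops. This is essentially the role of that term.

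\textbf{Step 2: the source terms.} Bound the remaining contributions by Cauchy--Schwarz and Young. The stiff source gives
\begin{equation*}
\tfrac{2}{\varepsilon}\int\hat{\mathbf{v}}^\top\bigl(\mathbf{f}(\mathbf{u}_\theta)-\mathbf{f}(\mathbf{u})\bigr)\,\mathrm{d}x\le \tfrac{L}{\varepsilon}\bigl(\|\hat{\mathbf{u}}\|^2+\|\hat{\mathbf{v}}\|^2\bigr),
\end{equation*}
using the Lipschitz bound. The relaxation damping $-\tfrac{2}{\varepsilon}\|\hat{\mathbf{v}}\|^2$ is nonpositive and can be discarded. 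Residual terms are controlled by $\int 2\hat{\mathbf{u}}^\top\mathbf{A}^2\Res_{i,\mathbf{u}}\,\mathrm{d}x\le \|\mathbf{A}\|^2(\|\hat{\mathbf{u}}\|^2+\|\Res_{i,\mathbf{u}}\|^2)$, and similarly for $\hat{\mathbf{v}}$. Combining these with the norm equivalence gives
\begin{equation*}
E'(t)\le C_1\bigl(1+\tfrac{L}{\varepsilon}\bigr)E(t)+C_2\bigl(\|\Res_{i,\mathbf{u}}(t)\|^2+\|\Res_{i,\mathbf{v}}(t)\|^2\bigr),
\end{equation*}
where $C_1,C_2$ depend only on $\|\mathbf{A}\|$ and $\|\mathbf{A}^{-1}\|$.

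\textbf{Step 3: Gr\"onwall and integration in time.} Gr\"onwall's inequality then gives
\begin{equation*}
E(t)\le e^{C_1(1+L/\varepsilon)t}\Bigl(E(0)+C_2\,\mathcal{E}_G^i[\theta]^2\Bigr),
\end{equation*}
with $E(0)\lesssim\mathcal{E}_G^t[\theta]^2$. Integrating over $[0,T]$ and using the norm equivalence once more yields $\mathcal{E}[\theta]^2\le \widetilde C_{T,\varepsilon}^2\,\mathcal{E}_G[\theta]^2$, where $\widetilde C_{T,\varepsilon}^2\sim T\,\|\mathbf{A}^{-1}\|^2 e^{C(1+L/\varepsilon)T}$. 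The constant visibly blows up as $\varepsilon\to0$, as anticipated in the statement; this is the nonuniformity that \cref{thm:total-uniform} removes.

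The main obstacle is the justification of Step 1 for the non-compactly supported network, namely ensuring that the error lies in $C([0,T];H^1)$ with enough regularity to differentiate $E$. I would first argue by density and smoothness of $\tanh$ networks, whose derivatives are bounded. If networks with nonzero limits at infinity cause trouble, I would instead restrict to $[-R',R']$ and bound the boundary flux by a trace-type estimate absorbed into $\mathcal{E}_G^p$ via the weight $\lambda$. The algebra in Steps 2 and 3 is routine.
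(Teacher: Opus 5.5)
Your proposal is correct and follows the paper's proof essentially step for step. Both arguments use the same weighted energy $\|\mathbf{A}\hat{\mathbf{u}}\|_{L^2}^2+\|\hat{\mathbf{v}}\|_{L^2}^2$, cancel the cross terms by symmetry of $\mathbf{A}^2$ and integration by parts, bound the stiff source with Lipschitz and Young to get the $L/\varepsilon$ growth, and finish with Gr\"onwall, integration over $[0,T]$, and norm equivalence.

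Your treatment of the boundary term is more careful than the paper's, which does not address it. The truncation fallback is unnecessary, and it would also introduce a $\lambda$-dependence that the statement does not allow. If $\mathcal{E}_G[\theta]<\infty$, then $\lambda>0$ gives $\mathbf{u}_\theta,\mathbf{v}_\theta\in L^2(0,T;H^1(\mathbb{R}))$, so the flux integral vanishes for a.e.\ $t$. If $\mathcal{E}_G[\theta]=\infty$, there is nothing to prove.
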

\begin{proof}
Let $\mathbf{e}_{\mathbf{u}}=\mathbf{u}-\mathbf{u}_{\theta}$, $\mathbf{e}_{\mathbf{v}}=\mathbf{v}-\mathbf{v}_{\theta}$, with error system:
\begin{align*}
\begin{cases}
    \partial_t\mathbf{e}_{\mathbf{u}}+\partial_x\mathbf{e}_{\mathbf{v}}=-\Res_{i,\mathbf{u}}[\theta] \\
    \partial_t\mathbf{e}_{\mathbf{v}}+\mathbf{A}^2\partial_x\mathbf{e}_{\mathbf{u}}=\frac{1}{\varepsilon}(\mathbf{f}(\mathbf{u}_{\theta})-\mathbf{f}(\mathbf{u})-\mathbf{e}_{\mathbf{v}})-\Res_{i,\mathbf{v}}[\theta].
\end{cases}
\end{align*}
With $E(t)=\|\mathbf{A}\mathbf{e}_{\mathbf{u}}\|_{L^2(\mathbb{R})}^2+\|\mathbf{e}_{\mathbf{v}}\|_{L^2(\mathbb{R})}^2$, symmetry of $\mathbf{A}$ and integration by parts cancel the spatial cross terms, and
\[
  \frac{1}{2}\frac{\mathrm{d}E}{\mathrm{d}t}
  = -\langle\Res_{i,\mathbf{u}},\mathbf{A}^2\mathbf{e}_{\mathbf{u}}\rangle
    +\frac{1}{\varepsilon}\langle\mathbf{f}(\mathbf{u}_{\theta})-\mathbf{f}(\mathbf{u}),\mathbf{e}_{\mathbf{v}}\rangle
    -\frac{1}{\varepsilon}\|\mathbf{e}_{\mathbf{v}}\|_{L^2}^2-\langle\Res_{i,\mathbf{v}},\mathbf{e}_{\mathbf{v}}\rangle .
\]
Bounding the Lipschitz term by $\frac{LC_{A^{-1}}}{\varepsilon} \|\mathbf{A}\mathbf{e}_{\mathbf{u}}\|_{L^2}\|\mathbf{e}_{\mathbf{v}}\|_{L^2}$ and the residual terms using the Cauchy--Schwarz and Young inequalities, then absorbing quadratics, yields, via Gr\"onwall's inequality~\cite{GripenbergLondenStaffans1990}, $\frac{\mathrm{d}E}{\mathrm{d}t}\le CE+D(\|\Res_{i,\mathbf{u}}\|_{L^2}^2+\|\Res_{i,\mathbf{v}}\|_{L^2}^2)$ with $C,D$ depending on $\mathbf{A},\varepsilon,L$. Gr\"onwall, integration over $[0,T]$, and the equivalence $\|\mathbf{e}_{\mathbf{u}}\|_{L^2}^2+\|\mathbf{e}_{\mathbf{v}}\|_{L^2}^2\le\max\{C_{A^{-1}}^2,1\}E$ give $\mathcal{E}[\theta]\le\widetilde C_{T,\varepsilon}\,\mathcal{E}_G[\theta]$. The constant $\widetilde C_{T,\varepsilon}$ contains $L/\varepsilon$ and degrades as $\varepsilon\to0$.
\end{proof}

\begin{theorem}[$\varepsilon$-uniform total-error estimate]\label{thm:total-uniform}
Assume the sub-characteristic condition \cref{eq:subchar}. Under either: (i) initial data of $H^s$ regularity with $s>3/2$, or (ii) initial data of $H^1$ regularity with uniform-in-$\varepsilon$ existence time and a uniform $W^{1,\infty}$ bound on the solution
\begin{equation}\tag{H}\label{eq:Hunif}
  \|\mathbf{u}^\varepsilon\|_{W^{1,\infty}([0,T]\times\mathbb{R})} \le M,
\end{equation}
with $M$ independent of $\varepsilon$. Then there is $\widetilde C_T>0$ depending only on $T,\mathbf{A},L,\lambda$ (\emph{independent of $\varepsilon$}) with $\mathcal{E}[\theta]\le \widetilde C_T\,\mathcal{E}_G[\theta]$.
\end{theorem}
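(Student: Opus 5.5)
We will rerun the energy argument of \cref{thm:total-nonuniform}, but weight the error system by a symmetrizer that makes the stiff relaxation term \emph{dissipative} rather than merely Lipschitz. This removes the $L/\varepsilon$ factor from the Gr\"onwall constant.

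\textbf{Step 1: linearize the source.} With $\mathbf{e}_{\mathbf{u}}=\mathbf{u}-\mathbf{u}_{\theta}$ and $\mathbf{e}_{\mathbf{v}}=\mathbf{v}-\mathbf{v}_{\theta}$, the error system is as in the proof of \cref{thm:total-nonuniform}. By the mean value theorem, write
\[
\mathbf{f}(\mathbf{u}_{\theta})-\mathbf{f}(\mathbf{u})=-\mathbf{B}\,\mathbf{e}_{\mathbf{u}},\qquad \mathbf{B}(t,x)=\int_0^1\mathbf{Df}\bigl(\mathbf{u}-\tau\mathbf{e}_{\mathbf{u}}\bigr)\,\mathrm{d}\tau .
\]
The stiff term then reads $-\varepsilon^{-1}(\mathbf{B}\mathbf{e}_{\mathbf{u}}+\mathbf{e}_{\mathbf{v}})$, with relaxation matrix $Q=(\mathbf{B}\;\;\mathbf{I})$. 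To keep $\mathbf{B}$ in the region where the sub-characteristic condition \cref{eq:subchar} holds, and to keep $\partial_x\mathbf{B}$ bounded, we need $L^\infty$ bounds on $\mathbf{u}$ and $\mathbf{u}_\theta$ and $W^{1,\infty}$ bounds on $\mathbf{u}$.
\begin{itemize}
\item In case (i) these come from \cref{prop:regularity-uniform}\ref{thm:uniform-existence} together with the embedding $H^s\hookrightarrow W^{1,\infty}$ for $s>3/2$.
\item In case (ii) they are exactly \eqref{eq:Hunif}.
\item For $\mathbf{u}_\theta$ we would use the $\lambda\,\mathcal{E}_G^p$ term in $\mathcal{E}_G$, which controls $\|\mathbf{u}_\theta\|_{H^1}$. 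This is why $\widetilde C_T$ is allowed to depend on $\lambda$. Alternatively, we would argue on the set where $\mathcal{E}_G$ is small and absorb the remaining case trivially.
\end{itemize}

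\textbf{Step 2: the symmetrizer.} Following \cite{Yong1993,ChenLevermoreLiu1994}, take the weighted energy
\[
E(t)=\int_{\mathbb{R}} \bigl(\mathbf{A}^{2}\mathbf{e}_{\mathbf{u}}\cdot\mathbf{e}_{\mathbf{u}}-2\,\mathbf{B}\mathbf{e}_{\mathbf{u}}\cdot\mathbf{e}_{\mathbf{v}}+\mathbf{e}_{\mathbf{v}}\cdot\mathbf{e}_{\mathbf{v}}\bigr)\,\mathrm{d}x ,
\]
or equivalently $\int \mathbf{e}^{\top}\mathbf{A}_0\mathbf{e}$ with $\mathbf{A}_0=\begin{pmatrix}\mathbf{A}^2 & -\mathbf{B}^{\top}\\ -\mathbf{B} & \mathbf{I}\end{pmatrix}$, suitably scaled by $\mathbf{A}^{-2}$ in the system case.

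Positive definiteness of $\mathbf{A}_0$ is equivalent to $\mathbf{A}^2-\mathbf{B}^\top\mathbf{B}>0$, which is what \cref{eq:subchar} provides, uniformly on the bounded range of states from Step 1. In the scalar case this is simply $a^2-f'(u)^2\ge c_0>0$. So $E$ is equivalent to $\|\mathbf{e}\|_{L^2}^2$ with $\varepsilon$-independent constants.

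Two checks remain.
\begin{itemize}
\item $\mathbf{A}_0$ symmetrizes the principal part $\begin{pmatrix}0&\mathbf{I}\\ \mathbf{A}^2&0\end{pmatrix}$: one checks that $\mathbf{A}_0\mathcal{M}$ is symmetric when $\mathbf{B}$ commutes appropriately with $\mathbf{A}^2$, which is automatic in the scalar case.
\item The source is dissipative: $\mathbf{A}_0\begin{pmatrix}0\\-Q\mathbf{e}\end{pmatrix}\cdot\mathbf{e}=-|\mathbf{B}\mathbf{e}_{\mathbf{u}}+\mathbf{e}_{\mathbf{v}}|^2\le0$. This holds because the second row of $\mathbf{A}_0$ is $(-\mathbf{B},\mathbf{I})=Q$ up to sign conventions.
\end{itemize}

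\textbf{Step 3: the differential inequality.} Differentiating $E$ gives four kinds of terms.
\begin{itemize}
\item The principal terms cancel after integration by parts, up to commutator terms involving $\partial_x\mathbf{B}$ and $\partial_t\mathbf{B}$.
\item The stiff term contributes $-\tfrac{2}{\varepsilon}\|\mathbf{B}\mathbf{e}_{\mathbf{u}}+\mathbf{e}_{\mathbf{v}}\|^2\le0$, which we simply drop.
\item The residuals are handled by Cauchy--Schwarz and Young: $\le E+C(\|\Res_{i,\mathbf{u}}\|^2+\|\Res_{i,\mathbf{v}}\|^2)$.
\item The commutator terms are bounded by $C(M)E$.
\end{itemize}

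The main obstacle is $\partial_t\mathbf{B}$. It involves $\partial_t\mathbf{u}$, which is fine, but also $\partial_t\mathbf{e}_{\mathbf{u}}=-\partial_x\mathbf{e}_{\mathbf{v}}-\Res_{i,\mathbf{u}}$. This is not controlled by $E$ and contains no $1/\varepsilon$, so it is harmless with respect to $\varepsilon$ but costs a derivative.

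My first attempt would be to freeze $\mathbf{B}$ at the exact solution, $\mathbf{B}=\mathbf{Df}(\mathbf{u})$. The remainder $\mathbf{f}(\mathbf{u}_\theta)-\mathbf{f}(\mathbf{u})+\mathbf{Df}(\mathbf{u})\mathbf{e}_{\mathbf{u}}=\mathcal{O}(|\mathbf{e}_{\mathbf{u}}|^2)$ enters multiplied by $1/\varepsilon$. I would absorb it into the dissipation $\tfrac1\varepsilon\|\mathbf{B}\mathbf{e}_{\mathbf{u}}+\mathbf{e}_{\mathbf{v}}\|^2$ plus an $\tfrac1\varepsilon\|\mathbf{e}_{\mathbf{u}}\|^2$-type piece, using the strict margin $c_0$ from \cref{eq:subchar}. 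Then $\partial_t\mathbf{B}$ depends only on the exact solution and is bounded by $M$. Should the quadratic remainder resist absorption, I would fall back on the $\lambda$-controlled $H^1$ bound of $\mathbf{u}_\theta$.

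Once $\tfrac{\mathrm{d}E}{\mathrm{d}t}\le CE+D(\|\Res_{i,\mathbf{u}}\|^2+\|\Res_{i,\mathbf{v}}\|^2)$ holds with $C,D$ free of $\varepsilon$, we apply Gr\"onwall, bound $E(0)\lesssim\mathcal{E}_G^t[\theta]^2$, integrate over $[0,T]$, and use the norm equivalence. This yields $\mathcal{E}[\theta]\le\widetilde C_T\,\mathcal{E}_G[\theta]$.
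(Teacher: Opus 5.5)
Your architecture (mean-value linearization, a sub-characteristic symmetrizer, dropping the stiff term as dissipative, then Gr\"onwall) is the paper's. However, two steps fail as written.

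\textbf{The sign of your symmetrizer is wrong.} This is exactly the step on which the $\varepsilon$-uniformity rests. Take $\mathbf{A}_0=\begin{bmatrix}\mathbf{A}^2&-\mathbf{B}^\top\\ -\mathbf{B}&\mathbf{I}\end{bmatrix}$ with stiff source $\bigl(0,-\varepsilon^{-1}(\mathbf{B}\mathbf{e}_{\mathbf{u}}+\mathbf{e}_{\mathbf{v}})\bigr)$. The stiff contribution to $\tfrac12\tfrac{\mathrm{d}}{\mathrm{d}t}E$ is then $\varepsilon^{-1}\int(\mathbf{B}\mathbf{e}_{\mathbf{u}}-\mathbf{e}_{\mathbf{v}})\cdot(\mathbf{B}\mathbf{e}_{\mathbf{u}}+\mathbf{e}_{\mathbf{v}})\,\mathrm{d}x=\varepsilon^{-1}\int\bigl(|\mathbf{B}\mathbf{e}_{\mathbf{u}}|^2-|\mathbf{e}_{\mathbf{v}}|^2\bigr)\,\mathrm{d}x$. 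This is indefinite, so a $1/\varepsilon$ survives. The second block row of $\mathbf{A}_0$ must be exactly the relaxation row $(\mathbf{B},\mathbf{I})$, not ``up to sign''. That is the paper's $\bm{A}_0=\begin{bmatrix}a^2&\bar f'\\ \bar f'&1\end{bmatrix}$, for which the stiff term equals $-\varepsilon^{-1}\int(\bar f'e_u+e_v)^2\,\mathrm{d}x$. Positive definiteness (via the Schur complement) and symmetrization of the principal part do not see this sign, which is presumably why it slipped through.

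\textbf{Your cure for the $\partial_t\mathbf{B}$ obstacle does not work.} Freeze $b=f'(u)$ and write $w=b\,e_u+e_v$ and $r=f(u-e_u)-f(u)+f'(u)e_u=\mathcal{O}(e_u^2)$. The stiff contribution is $\varepsilon^{-1}\int w(r-w)\,\mathrm{d}x$. Since $wr-w^2\le r^2/4$ pointwise, with equality at $w=r/2$, the best possible bound is $\tfrac{1}{4\varepsilon}\|r\|_{L^2}^2\lesssim\varepsilon^{-1}\|e_u\|_{L^\infty}^2\|e_u\|_{L^2}^2$.

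The relaxation dissipation has rank one: it controls only $w$. The margin $a^2-b^2\ge c_0$ gives coercivity of $\mathbf{A}_0$, not an $\varepsilon^{-1}\|e_u\|_{L^2}^2$ dissipation. So nothing absorbs this term unless $\|e_u\|_{L^\infty}=\mathcal{O}(\sqrt{\varepsilon})$. That is circular, and it is not implied by $L^2$-smallness of $\mathcal{E}_G$. The $\lambda$ term does not rescue it either: it controls $\int_0^T\|\mathbf{u}_\theta\|_{H^1}^2\,\mathrm{d}t$, not $\|\partial_{t,x}\mathbf{u}_\theta\|_{L^\infty}$ or the range of $\mathbf{u}_\theta$.

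\textbf{What the paper does instead.} It keeps the mean-value coefficient $\bar f'$ (your original $\mathbf{B}$) in the symmetrizer. It then bounds $\bm{M}=\partial_t\bm{A}_0+\partial_x(\bm{A}_0\bm{A}_1)$ in $L^\infty$ ``via \textup{(H)}''. Since $\bar f'$ depends on $u_\theta$, this tacitly requires an \textup{(H)}-type $W^{1,\infty}$ bound, and $|\bar f'|<a$, for the network as well; \textup{(H)} itself concerns only $u^\varepsilon$. To close your argument, impose that bound on $u_\theta$ explicitly (or let $\widetilde C_T$ depend on $\|u_\theta\|_{W^{1,\infty}}$) rather than freezing.
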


\begin{proof}
For $H^s(\mathbb{R})\hookrightarrow W^{1,\infty}(\mathbb{R})$ in one dimension and the $\varepsilon$-uniform bound supplied by \cref{thm:uniform-existence}, we only have to prove the theorem under condition (ii).

For simplicity, we present the representative scalar ($2\times2$) case. Writing $\bm{E}=(e_u,e_v)^\top$ and using $f(u^\varepsilon)-f(u^\varepsilon-e_u)=\bar f'(U^\varepsilon)e_u$ with $\bar f'(U^\varepsilon)=\int_0^1 f'(u^\varepsilon-\xi e_u)\,\mathrm{d}\xi$, the error system becomes
\begin{equation}\label{eq:error-matrix}
  \bm{E}_t+\bm{A}_1\bm{E}_x=\tfrac1\varepsilon\bm{Q}(U^\varepsilon)\bm{E}+\bm{R},
\end{equation}
where
\begin{equation*}
  \bm{A}_1=\begin{bmatrix}0&1\\ a^2&0\end{bmatrix}, \quad
  \bm{Q}=\begin{bmatrix}0&0\\ -\bar f'& -1\end{bmatrix}, \quad
  \bm{R}=\begin{bmatrix}-\Res_{i,u}\\ -\Res_{i,v}\end{bmatrix}.
\end{equation*}

Introduce the symmetrizer $\bm{A}_0(U^\varepsilon)=\begin{bmatrix}a^2 & \bar f'\\ \bar f' & 1\end{bmatrix}$. Under \cref{eq:subchar}, $|\bar f'|<a$, so $\bm{A}_0$ is symmetric positive definite with $\det\bm{A}_0=a^2-(\bar f')^2>0$, hence uniformly equivalent to the identity: $C_0^{-1}|\bm{x}|^2\le\bm{x}^\top\bm{A}_0\bm{x}\le C_0|\bm{x}|^2$ with $C_0$ independent of $\varepsilon$. Define the energy $E_{\mathrm{e}}(t)=\tfrac12\int_\mathbb{R}\bm{E}^\top\bm{A}_0\bm{E}\,\mathrm{d}x$, which is equivalent to $\|\bm{E}\|_{L^2(\mathbb{R})}^2$. The stiff term is dissipative: a direct computation gives
\[
  \bm{A}_0\bm{Q}+\bm{Q}^\top\bm{A}_0
  =\begin{bmatrix}-2(\bar f')^2 & -2\bar f'\\ -2\bar f' & -2\end{bmatrix},
  \qquad
  \bm{E}^\top\bigl(\bm{A}_0\bm{Q}+\bm{Q}^\top\bm{A}_0\bigr)\bm{E}
  = -2\bigl(\bar f'\,e_u+e_v\bigr)^2 \le 0,
\]
so this matrix is negative semidefinite (its determinant vanishes; the controlled direction is $\bar f' e_u+e_v$). Since $\bm{A}_0$ is symmetric, $\bm{A}_0\bm{Q}$ has symmetric part $\tfrac12(\bm{A}_0\bm{Q}+\bm{Q}^\top\bm{A}_0)$, so $\bm{E}^\top\bm{A}_0\bm{Q}\bm{E} =\tfrac12\bm{E}^\top(\bm{A}_0\bm{Q}+\bm{Q}^\top\bm{A}_0)\bm{E}$ and
\[
  \tfrac1\varepsilon\!\int_\mathbb{R}\bm{E}^\top\bm{A}_0\bm{Q}\bm{E}\,\mathrm{d}x
  = \tfrac1{2\varepsilon}\!\int_\mathbb{R}
    \bm{E}^\top\bigl(\bm{A}_0\bm{Q}+\bm{Q}^\top\bm{A}_0\bigr)\bm{E}\,\mathrm{d}x
  = -\tfrac1\varepsilon\!\int_\mathbb{R}\bigl(\bar f'\,e_u+e_v\bigr)^2\,\mathrm{d}x \le 0 .
\]
The convection coefficient is symmetric,
\[
  \bm{A}_0\bm{A}_1
  =\begin{bmatrix}a^2 & \bar f'\\ \bar f' & 1\end{bmatrix}
   \begin{bmatrix}0 & 1\\ a^2 & 0\end{bmatrix}
  =\begin{bmatrix}a^2\bar f' & a^2\\ a^2 & \bar f'\end{bmatrix}
  =(\bm{A}_0\bm{A}_1)^\top,
\]
so integration by parts moves the spatial derivative onto the matrix field,
\[
  -\int_\mathbb{R}\bm{E}^\top\bm{A}_0\bm{A}_1\bm{E}_x\,\mathrm{d}x
  =\tfrac12\int_\mathbb{R}\bm{E}^\top\partial_x(\bm{A}_0\bm{A}_1)\bm{E}\,\mathrm{d}x .
\]
Hence
\[
  \tfrac{\mathrm{d}}{\mathrm{d}t}E_{\mathrm{e}}(t)
  \le\tfrac12\!\int_\mathbb{R}\bm{E}^\top\bm{M}(U^\varepsilon)\bm{E}\,\mathrm{d}x
    +\int_\mathbb{R}\bm{E}^\top\bm{A}_0\bm{R}\,\mathrm{d}x,
  \quad
  \bm{M}=\partial_t\bm{A}_0+\partial_x(\bm{A}_0\bm{A}_1).
\]
The entries of $\bm{M}$ involve only $\partial_{t,x}\bar f'$, controlled in $L^\infty$ by a constant $C_{\bm{M}}$ \emph{independent of $\varepsilon$} via hypothesis \textup{(H)} and $f\in C^\infty$. Therefore $\tfrac{\mathrm{d}}{\mathrm{d}t}E_{\mathrm{e}}\le C_3 E_{\mathrm{e}}+C_2\|\bm{R}\|_{L^2(\mathbb{R})}^2$ with $C_2,C_3$ independent of $\varepsilon$. Using Gr\"onwall inequality and absorbing $T,M,a,L$-dependent constants into $\widetilde C_T$ yields $\mathcal{E}[\theta]\le \widetilde C_T\,\mathcal{E}_G[\theta]$.
\end{proof}

\subsection{Relaxation limit and convergence of the method}\label{subsec:limit}
\begin{lemma}[Jin--Xin relaxation limit \cite{Natalini1996}]\label{lem:relax-limit}
Let the flux function $f \colon \mathbb{R} \to \mathbb{R}$ be of class $C^1$ and satisfy $f(0) = f'(0) = 0$. Assume that the perturbed initial data $(u_0^\varepsilon, v_0^\varepsilon) \in (L^1(\R) \cap L^\infty(\R) \cap BV(\R))^2$ takes the form
\begin{equation}\label{eq:init-data}
    u_0^\varepsilon(x) := u_0(x), \quad v_0^\varepsilon(x) = f(u_0(x)) + K(x)\omega(\varepsilon),
\end{equation}
where $K \in L^1(\R) \cap L^\infty(\R) \cap BV(\R)$, and $\omega: [0,\infty) \to [0,\infty)$ is a continuous function satisfying $\omega(0) = 0$.

Furthermore, assume there exist positive constants $\rho_0 > 0$ and $M > 0$, independent of $\varepsilon$, such that the following uniform bounds hold:
\begin{equation}\label{eq:uniform-bounds}
    \rho_0 = \max \left( \sup_{\varepsilon>0} \|v_0^\varepsilon\|_{L^\infty}, \sup_{\varepsilon>0} \|u_0^\varepsilon\|_{L^\infty} \right), \quad 
    \|(u_0^\varepsilon, v_0^\varepsilon)\|_{BV} \le M,
\end{equation}
and for the Lipschitz constant of $f$ and the kernel $K$:
\begin{equation}\label{eq:lip-k-bounds}
    \text{Lip}(f) := \sup_{x \neq y} \left| \frac{f(x) - f(y)}{x - y} \right| \le M, \quad \|K\|_{L^1} \le M.
\end{equation}

Under these assumptions, the following results hold:
\begin{enumerate}
    \item \textbf{Convergence:} As the relaxation rate $\varepsilon \to 0$, the global solution $(u^\varepsilon, v^\varepsilon)$ converges to the equilibrium state $(u, f(u))$.
    
    \item \textbf{Error estimate for $u^\varepsilon$:} For any fixed time horizon $T > 0$ and for all $t \in [0, T]$, there exists a positive constant $C_T$ depending only on $T$ such that
    \begin{equation}\label{eq:u-error}
        \|u^\varepsilon(\cdot,t) - u(\cdot,t)\|_{L^1(\R)} \le C_T \sqrt{\varepsilon}.
    \end{equation}
    
    \item \textbf{Local $BV$ estimate:} If the initial data $(u_0^\varepsilon, v_0^\varepsilon)$ is locally of bounded variation, then for any interval $(a, b) \subseteq \mathbb{R}$ and any $t \ge 0$, there exists a constant $C = C(\alpha)$ depending only on $\alpha$ and independent of $\varepsilon$ such that
    \begin{equation}\label{eq:local-bv}
        \|(u^\varepsilon(\cdot,t), v^\varepsilon(\cdot,t))\|_{BV((a,b))} \le C \|(u_0^\varepsilon, v_0^\varepsilon)\|_{BV((a-\sqrt{\alpha t}, b+\sqrt{\alpha t}))}.
    \end{equation}
\end{enumerate}
\end{lemma}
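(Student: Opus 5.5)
The plan is to follow Natalini's strategy: reduce the relaxation system to diagonal (Riemann-invariant) form, in which it becomes a monotone semilinear system. That gives $L^1$-contraction, $L^\infty$ and $BV$ bounds uniform in $\varepsilon$; compactness then yields convergence, and a Kuznetsov-type doubling-of-variables argument yields the $\sqrt{\varepsilon}$ rate.

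\textbf{Diagonalization and uniform bounds.} Set $w_\pm = v \pm a u$. Then \cref{eq:jinxin-sym} becomes
\[
\partial_t w_\pm \pm a\,\partial_x w_\pm = \tfrac1\varepsilon\bigl(f(u)-v\bigr),
\]
with $u=(w_+-w_-)/(2a)$ and $v=(w_++w_-)/2$. The source is $\tfrac1\varepsilon\bigl(f(\tfrac{w_+-w_-}{2a})-\tfrac{w_++w_-}{2}\bigr)$.
\begin{itemize}
\item Under the sub-characteristic condition $|f'|\le M<a$ (taking $a>M$), the source in the $w_+$ equation is nonincreasing in $w_+$ and nondecreasing in $w_-$, and symmetrically for $w_-$. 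The system is therefore quasi-monotone, and the scaled source is Lipschitz.
\item Characteristic integration and a comparison principle then give an $L^1$-contraction, $\|(w_\pm-\tilde w_\pm)(t)\|_{L^1}\le\|(w_\pm-\tilde w_\pm)(0)\|_{L^1}$, with no dependence on $\varepsilon$.
\item Invariant regions built from $\rho_0$ give $\|(u^\varepsilon,v^\varepsilon)\|_{L^\infty}\le C(\rho_0)$.
\item Applying the contraction to spatial translates gives $TV(u^\varepsilon,v^\varepsilon)(t)\le C\,TV(\text{data})\le CM$.
\item Finite propagation speed $a$, combined with the same contraction applied on truncated domains, gives the local $BV$ estimate \cref{eq:local-bv}. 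The dependence domain is read off from the characteristic speeds, which yields the enlarged interval.
\end{itemize}

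\textbf{Convergence.} Time equicontinuity in $L^1$ follows from $\partial_t u=-\partial_x v$, which gives $\|u(t)-u(s)\|_{L^1}\le CM|t-s|$. Helly's theorem then provides a subsequence with $u^\varepsilon\to u$ in $L^1_{\rm loc}$.

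For the limit of $v^\varepsilon$, multiply the second equation by $\varepsilon$ and integrate against a test function. This shows $\|f(u^\varepsilon)-v^\varepsilon\|_{L^1((0,T)\times\R)}=O(\varepsilon)$ up to the initial-layer term $K\omega(\varepsilon)$, which vanishes as $\varepsilon\to0$. Hence $v^\varepsilon\to f(u)$.

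Entropy admissibility comes from the Kružkov entropies of the relaxation system. Natalini's entropy pairs for $2\times2$ relaxation are dissipative under the sub-characteristic condition, and they give
\[
\partial_t|u^\varepsilon-k|+\partial_x\,\mathrm{sgn}(u^\varepsilon-k)\bigl(f(u^\varepsilon)-f(k)\bigr)\le O(\varepsilon)\text{-terms}
\]
in the sense of distributions. So the limit is the entropy solution; by uniqueness the whole family converges.

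\textbf{Rate.} Compare $u^\varepsilon$ with the Kružkov solution $u$ by doubling variables with mollifiers of width $\delta$ in space and time, following Kuznetsov's lemma. The resulting error is
\[
\|u^\varepsilon(t)-u(t)\|_{L^1}\le \|u_0^\varepsilon-u_0\|_{L^1}+C\delta\,TV+C\,\tfrac{\varepsilon}{\delta}\,TV.
\]
\begin{itemize}
\item The $C\delta\,TV$ term is the mollification error.
\item The $C\varepsilon/\delta$ term comes from the relaxation defect $\varepsilon(\partial_t v+a^2\partial_x u)$: tested against derivatives of the mollifier, it is controlled by $\varepsilon\,TV/\delta$.
\end{itemize}
Optimizing with $\delta=\sqrt\varepsilon$ gives \cref{eq:u-error}, with $C_T$ depending on $T$, $M$ and $\rho_0$; the initial perturbation contributes $\omega(\varepsilon)\|K\|_{L^1}$.

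\textbf{Main obstacle.} The hardest step is deriving a Kružkov-type entropy inequality for $u^\varepsilon$ with an explicit $O(\varepsilon)$ defect. Two difficulties arise. First, the $v$-equation couples to $u$, so the nonequilibrium part $v^\varepsilon-f(u^\varepsilon)$ must be estimated in $L^1$ uniformly. I would get this from a $BV$-in-time bound, $\|\partial_t(u,v)\|_{L^1}\le C$, which comes from the contraction applied to time translates. Second, the initial layer must be handled through the assumed form of $v_0^\varepsilon$.
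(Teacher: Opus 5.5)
The paper gives no proof of this lemma; it quotes Natalini. Your outline (diagonal variables, $L^1$-contraction, $BV$ by translation, Helly, relaxation entropies, Kuznetsov) is the standard route. The genuine gap is how you handle the nonequilibrium data $v_0^\varepsilon=f(u_0)+K\omega(\varepsilon)$, where $\omega$ is only continuous with $\omega(0)=0$.

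First, the uniform bound $\|\partial_t(u^\varepsilon,v^\varepsilon)(t)\|_{L^1}\le C$ that you want from contraction of time translates is false in general. Contraction only propagates $\|\partial_t(u^\varepsilon,v^\varepsilon)(0)\|_{L^1}$, and $\partial_t v^\varepsilon(0)=-a^2\partial_x u_0-\varepsilon^{-1}\omega(\varepsilon)K$. So at $t=0$ the time derivative has size $\omega(\varepsilon)/\varepsilon$, which is uniform only if $\omega(\varepsilon)=O(\varepsilon)$. This breaks your route to $\int_0^T\|v^\varepsilon-f(u^\varepsilon)\|_{L^1}\,dt=O(\varepsilon)$, which is what feeds the $\varepsilon/\delta$ term.

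Second, your final bound keeps the term $\omega(\varepsilon)\|K\|_{L^1}$, which you yourself record as the contribution of the initial perturbation. What you actually prove is therefore $C_T(\sqrt{\varepsilon}+\omega(\varepsilon))$. That equals the claimed $C_T\sqrt{\varepsilon}$ only when $\omega(\varepsilon)=O(\sqrt{\varepsilon})$.

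Both defects are repaired by an explicit initial-layer estimate that uses only the uniform spatial $TV$ bounds.
With $z=v^\varepsilon-f(u^\varepsilon)$ one has $\partial_t z+\varepsilon^{-1}z=-a^2\partial_x u^\varepsilon+f'(u^\varepsilon)\partial_x v^\varepsilon$. Hence $\|z(t)\|_{L^1}\le e^{-t/\varepsilon}\omega(\varepsilon)\|K\|_{L^1}+C\varepsilon$, and so $\int_0^T\|z\|_{L^1}\,dt=O(\varepsilon)$. Then run the Kuznetsov comparison on $[t_0,T]$ with $t_0=\varepsilon\ln(1/\varepsilon)$:
\begin{itemize}
\item $u^\varepsilon$ (because $\partial_t u^\varepsilon=-\partial_x v^\varepsilon$) and $u$ are both Lipschitz in time into $L^1$ with $\varepsilon$-independent constants, so $\|u^\varepsilon(t_0)-u(t_0)\|_{L^1}\le Ct_0=o(\sqrt{\varepsilon})$;
\item the excess of the relaxation entropy over $|u-k|$ at $t_0$ is at most $\|z(t_0)\|_{L^1}/a=O(\varepsilon)$, not $O(\omega(\varepsilon))$.
\end{itemize}

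Two smaller corrections.
\begin{itemize}
\item Your monotonicity claim has a sign error. With $u=(w_+-w_-)/(2a)$ one gets $\partial_{w_-}\bigl(f(u)-v\bigr)=-(a+f'(u))/(2a)\le 0$, so the source is nonincreasing, not nondecreasing, in $w_-$. The cooperative pair is $(v+au,\,au-v)$. The $L^1$-contraction survives, since the $L^1$ norm ignores the sign flip, but comparison and invariant-region arguments must use that order.
\item Finite propagation speed enlarges the interval by (relaxation speed)$\times t$, i.e.\ linearly in $t$, and that is what your argument proves. Reading $\alpha$ as the squared relaxation speed, it does not give the printed $\sqrt{\alpha t}$ interval for $t>1$. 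That form is presumably a slip for $\sqrt{\alpha}\,t$, just as the statement omits the sub-characteristic condition that you rightly add.
\end{itemize}
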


\begin{theorem}[Convergence of JXRGCM]\label{thm:overall}
Let $u^0$ be the entropy solution in the smooth regime $t\in[0,T]$ and let $u_{\mathrm{NN}}^\varepsilon$ be the PINN trained on the Jin--Xin system at parameter $\varepsilon$.  Under the hypotheses of \cref{thm:total-uniform,lem:relax-limit}, there are constants $C_1,C_2>0$ with $C_1$ independent of $\varepsilon$ such that
\begin{equation}\label{eq:overall-bound}
  \norm{u_{\mathrm{NN}}^\varepsilon-u^0}{L^2([0,T]\times\R)}
  \le C_1\,\EG[\theta] + C_2\,\varepsilon^{1/4}.
\end{equation}
In particular $u_{\mathrm{NN}}^\varepsilon\to u^0$ in $L^2$ as $\EG[\theta]\to0$ and $\varepsilon\to0$. Under the hypotheses of \cref{thm:total-nonuniform,lem:relax-limit} we have a similar result with a uniform estimate of $\varepsilon$.
\end{theorem}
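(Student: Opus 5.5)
The plan is a triangle-inequality split through the relaxed solution $u^\varepsilon$ of \cref{eq:jinxin-sym}:
\[
  \norm{u_{\mathrm{NN}}^\varepsilon-u^0}{L^2([0,T]\times\R)}
  \le \norm{u_{\mathrm{NN}}^\varepsilon-u^\varepsilon}{L^2([0,T]\times\R)}
  + \norm{u^\varepsilon-u^0}{L^2([0,T]\times\R)}.
\]
The first term is a network-error term and the second a pure relaxation-limit term. Each is bounded by one earlier result.

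For the first term, $u_{\mathrm{NN}}^\varepsilon=u_\theta$ is the $u$-component of the network. By definition \cref{eq:total-error}, the first term is at most $\mathcal{E}[\theta]$. \Cref{thm:total-uniform} then gives $\mathcal{E}[\theta]\le\widetilde C_T\,\EG[\theta]$, with $\widetilde C_T$ depending only on $T,\mathbf{A},L,\lambda$. So I set $C_1=\widetilde C_T$, which is independent of $\varepsilon$. Under the hypotheses of \cref{thm:total-nonuniform} instead, the same step gives $C_1=\widetilde C_{T,\varepsilon}$. That is the variant mentioned in the last sentence of the statement, and the argument is otherwise unchanged.

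For the second term I would convert the $L^1$ rate \cref{eq:u-error} of \cref{lem:relax-limit} into an $L^2$ rate by interpolation:
\[
  \|w\|_{L^2(\R)}^2\le \|w\|_{L^\infty(\R)}\|w\|_{L^1(\R)}, \qquad w=u^\varepsilon(\cdot,t)-u^0(\cdot,t).
\]
This needs a uniform $L^\infty$ bound on $w$. I would obtain it from the invariant-region/maximum-principle property of the Jin--Xin system under the sub-characteristic condition: $u^\varepsilon$ stays in a bounded set determined by $\rho_0$ in \cref{eq:uniform-bounds}, and $u^0$ is bounded by $\|u_0\|_{L^\infty}$. Alternatively, in case (i), the uniform $H^s\hookrightarrow L^\infty$ bound \cref{eq:uniform-bound} gives the same conclusion. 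Either way $\|w\|_{L^\infty}\le 2\rho_0$ with $\rho_0$ independent of $\varepsilon$. Hence
\[
  \|u^\varepsilon(\cdot,t)-u^0(\cdot,t)\|_{L^2}\le (2\rho_0 C_T)^{1/2}\varepsilon^{1/4}.
\]
Squaring and integrating over $t\in[0,T]$ yields the second term with $C_2=(2\rho_0C_TT)^{1/2}$. Adding the two bounds gives \cref{eq:overall-bound}. The convergence statement follows by letting $\EG[\theta]\to0$ and $\varepsilon\to0$, using the $\varepsilon$-uniformity of $C_1$ so that the order of the limits does not matter.

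The main obstacle is checking that the two sets of hypotheses are compatible. \Cref{lem:relax-limit} needs $f(0)=f'(0)=0$, BV/$L^1$ data, and the equilibrium choice $\omega\equiv0$ in \cref{eq:init-data}, so that $v_0^\varepsilon=f(u_0)$ matches \cref{eq:equilibrium-ic}. \Cref{thm:total-uniform} needs Sobolev data and (H). I would simply assume both, as the statement does. I would also remark that restricting to the smooth regime $[0,T]$ ensures $u^0$ is the classical solution there, so the $L^\infty$ bound on $u^0$ is immediate. The $L^\infty$ control of $u^\varepsilon$ is the one point needing a genuine citation (invariant regions for Jin--Xin under \cref{eq:subchar}). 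If that is unavailable, I would fall back on \cref{eq:uniform-bound} and Sobolev embedding.
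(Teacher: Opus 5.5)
Your proposal is correct and follows the paper's proof essentially step for step. You use the same triangle split through $u^\varepsilon$, bound the network term by \cref{thm:total-uniform} (or \cref{thm:total-nonuniform}), and handle the relaxation term by $L^1$--$L^\infty$ interpolation of the $\sqrt{\varepsilon}$ rate in \cref{lem:relax-limit} followed by integration in time, exactly as the paper does.

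The only difference is where the uniform $L^\infty$ bound comes from. The paper attributes it to the $BV$ estimate, while you take it from invariant regions or the uniform $H^s$ bound. Either route gives an $\varepsilon$-independent constant, and that is all the argument needs, even though the constant need not be exactly $2\rho_0$.
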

\begin{proof}
Split by the triangle inequality,
\[
  \norm{u_{\mathrm{NN}}^\varepsilon-u^0}{L^2}
  \le\norm{u_{\mathrm{NN}}^\varepsilon-u^\varepsilon}{L^2}
   +\norm{u^\varepsilon-u^0}{L^2}.
\]
The first term is the PINN approximation error, bounded by $C_1\EG[\theta]$ via \cref{thm:total-uniform} (or \cref{thm:total-nonuniform}).  For the second, the solutions are uniformly $L^\infty$-bounded (from the $BV$ estimate), so for fixed $t$,
\[
  \norm{u^\varepsilon-u^0}{L^2(\R)}^2
  \le\norm{u^\varepsilon-u^0}{L^\infty}\,\norm{u^\varepsilon-u^0}{L^1}
  \le 2M_\infty C_T\sqrt{\varepsilon}.
\]
Integrating over $[0,T]$ and taking the square root yields $\norm{u^\varepsilon-u^0}{L^2([0,T]\times\R)}\le C_2\,\varepsilon^{1/4}$ with $C_2=\sqrt{2M_\infty C_T T}$, which gives \cref{eq:overall-bound}.
\end{proof}

\begin{remark}\label{rem:scalar-scope}
\Cref{thm:overall} should be read as a \emph{scalar} result.  The relaxation-error step relies on \cref{lem:relax-limit} (Natalini's $L^1$ estimate \cite{Natalini1996}), which is established for scalar conservation laws with uniformly bounded $BV$ solutions; the $L^2$ rate $\mathcal{O}(\varepsilon^{1/4})$ then follows by $L^1$--$L^\infty$ interpolation.  For systems such as the shallow-water or Euler equations a corresponding $BV$/$L^1$ relaxation-limit theory with an explicit rate is not available in general, so \cref{eq:overall-bound} does not extend to them automatically: the PINN-to-relaxation bound (\cref{thm:total-uniform}) still holds system-wise, but the relaxation-to-equilibrium rate does not.  The system experiments of \cref{sec:numerics} are therefore reported as empirical evidence rather than as instances of \cref{thm:overall}.  Both results are also smooth-regime statements: the rate holds while $u^0$ stays in the regime covered by \cref{lem:relax-limit}. The $\varepsilon$-uniformity of $C_1$ (\cref{thm:total-uniform}) is what makes the continuation meaningful: the network error does not blow up as the schedule drives $\varepsilon\to0$.
\end{remark}

\subsection{Training error and quadrature}\label{subsec:quadrature}
The stability analysis is formulated in terms of the generalization error $\EG$, whereas the numerical training procedure is based on residuals evaluated at finitely many collocation points. The purpose of this subsection is to quantify the quadrature error arising from this finite-sample approximation and thereby relate the training and generalization errors.

Computing $\EG[\theta]$ requires numerical quadrature, which is exactly the training error $\ET[\theta;\mathcal{S}]$.  For midpoint quadrature on a domain $\Omega\subset\R^d$ partitioned into $M$ cubes of side $l$ (so $Ml^d=|\Omega|$) and $f\in C^2(\Omega)$, $\lvert\int_\Omega f-Q[f,\Omega]\rvert\le\tfrac{d|\Omega|}{24}l^2 \max_\Omega\norm{\nabla^2 f}{\infty}$ \cite{MishraMolinaro2023,HuLinRaydan2022}. This yields the following gap.

\begin{theorem}\label{thm:quadrature}
Let $\Omega_L=[0,T]\times[-L,L]$ and $\Omega_t=[-L,L]$, with $\mathcal{S}_i$ the midpoints of $M_i$ square cells of $\Omega_L$ and $\mathcal{S}_t$ the midpoints of $M_t$ intervals of $\Omega_t$, and constant weights $w_n^i=2LT/M_i$, $w_n^t=2L/M_t$.  Then for any $\lambda>0$,
\begin{equation}\label{eq:quadrature}
  \bigl\lvert\EG[\theta]-\ET[\theta;\mathcal{S}]\bigr\rvert
  \le\bigl(C_i M_i^{-1}+C_t M_t^{-2}+\lambda C_p M_i^{-1}\bigr)^{1/2},
\end{equation}
with
\begin{align*}
  C_i&=\tfrac{(LT)^2}{3}\norm{\Res_{i,u}^2+\Res_{i,v}^2}{W^{2,\infty}}, \\
  C_t&=\tfrac{L^3}{3}\norm{\Res_{t,u}^2+\Res_{t,v}^2}{W^{2,\infty}}, \\
  C_p&=\tfrac{(LT)^2}{3}\norm{\textstyle\sum_{|\alpha|\le1}
       \bigl((D^\alpha\bu_\theta)^2+(D^\alpha\bv_\theta)^2\bigr)}{W^{2,\infty}}.
\end{align*}
\end{theorem}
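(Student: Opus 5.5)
The plan is to reduce the statement to a componentwise midpoint-rule estimate and then pass from squared errors to the errors themselves. Write $\EG^2=\mathcal{I}_i+\mathcal{I}_t+\lambda\mathcal{I}_p$, where $\mathcal{I}_i=\int_{\Omega_L}(\Res_{i,u}^2+\Res_{i,v}^2)$, $\mathcal{I}_t=\int_{\Omega_t}(\Res_{t,u}^2+\Res_{t,v}^2)$, and $\mathcal{I}_p=\int_{\Omega_L}\sum_{|\alpha|\le1}((D^\alpha\bu_\theta)^2+(D^\alpha\bv_\theta)^2)$. The integrals are taken over the truncated domains $\Omega_L$ and $\Omega_t$; this truncation is implicit in the statement, since both the network and the residuals are evaluated only there. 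With constant weights, $\ET^2$ is exactly the midpoint rule $Q_i[\cdot]+Q_t[\cdot]+\lambda Q_p[\cdot]$ applied to the same three integrands. The integrands are smooth because $\tanh$ networks and $f$ are $C^\infty$, so each lies in $C^2$ with finite $W^{2,\infty}$ norm on the compact domains. This is what makes the constants $C_i,C_t,C_p$ meaningful.

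First I apply the midpoint bound quoted before the theorem to each piece. In the interior, $d=2$ and $|\Omega_L|=2LT$. The side length satisfies $M_il^2=2LT$, so $l^2=2LT/M_i$, which gives
\[
\bigl|\mathcal{I}_i-Q_i\bigr|\le\tfrac{2\cdot 2LT}{24}\cdot\tfrac{2LT}{M_i}\,\norm{\Res_{i,u}^2+\Res_{i,v}^2}{W^{2,\infty}}=\tfrac{(LT)^2}{3}M_i^{-1}\norm{\cdot}{W^{2,\infty}}.
\]
This reproduces $C_i$. The same computation for the $H^1$ integrand gives $\lambda C_pM_i^{-1}$. On the initial line, $d=1$, $|\Omega_t|=2L$ and $l=2L/M_t$, so the bound is $\tfrac{2L}{24}\cdot\tfrac{4L^2}{M_t^2}\norm{\cdot}{W^{2,\infty}}=\tfrac{L^3}{3}M_t^{-2}\norm{\cdot}{W^{2,\infty}}$, which is $C_tM_t^{-2}$. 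Summing the three pieces with the triangle inequality gives
\[
|\EG^2-\ET^2|\le C_iM_i^{-1}+C_tM_t^{-2}+\lambda C_pM_i^{-1}.
\]

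Second, I pass to the unsquared quantities using the elementary inequality $|a-b|\le\sqrt{|a^2-b^2|}$ for $a,b\ge0$. It follows from $|a-b|^2\le|a-b|(a+b)=|a^2-b^2|$. Applying it with $a=\EG$ and $b=\ET$ yields \cref{eq:quadrature}.

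The only delicate step is the first. The cited midpoint estimate needs $C^2$ regularity and must be applied to the squared residuals rather than to the residuals themselves. This matters because the norms in $C_i,C_t,C_p$ are of the squares, so the constants match exactly once the bookkeeping of $|\Omega|$ and $l$ is done carefully. Otherwise the argument is routine.
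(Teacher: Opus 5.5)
Your proposal is correct and follows the paper's proof essentially step for step. Both proofs split $\EG^2-\ET^2$ into the interior, initial, and $H^1$ pieces, apply the midpoint-rule bound to the squared integrands with $l^2=2LT/M_i$ and $l=2L/M_t$, and conclude via $|\EG-\ET|^2\le|\EG^2-\ET^2|$; your explicit bookkeeping reproduces $C_i$, $C_t$, and $C_p$ exactly.
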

\begin{proof}
Expand $|\EG^2-\ET^2|\le|\EG^{i\,2}-\ET^{i\,2}|+|\EG^{t\,2}-\ET^{t\,2}| +\lambda|\EG^{p\,2}-\ET^{p\,2}|$ and apply the midpoint error bound to each term on $\Omega_L$ (with $d=2$, $l_i^2=|\Omega_L|/M_i$) and on $\Omega_t$ (with $d=1$, $l_t=|\Omega_t|/M_t$), which gives the three contributions above; the left side is then bounded by $|\EG-\ET|^2\le|\EG^2-\ET^2|$.
\end{proof}

Thus, for a fixed trained network, the theorem provides an a posteriori connection between the training error and the generalization error entering the stability estimate, with the discrepancy decreasing as the quadrature resolution is refined.

%======================================================================
\section{Numerical experiments}\label{sec:numerics}
We illustrate JXRGCM on three benchmarks for \cref{eq:cons-law}.  Unless noted, errors are root mean square error (RMSE) against the exact solution. $\mathbf{A} = a\mathbf{I}$ represents a special case with a constant scalar speed $a$. In the following numerical experiments, we restrict our attention to this setting throughout.

\FloatBarrier
\subsection{Implementation and continuation schedule}\label{subsec:implementation}
The conserved variables and relaxation fluxes are represented by a single fully connected $\tanh$ network of width $W$ and depth $D$, trained with the Adam optimizer and a per-stage learning-rate schedule (\cref{alg:jxrgcm}). The continuation parameters used in the experiments below are summarized in \cref{tab:schedule}: $\varepsilon$ starts at $\varepsilon_0=10^{-2}$ and is halved ($r=\tfrac12$) at each stage until it falls below $\varepsilon^\ast=10^{-5}$, which takes ten stages and leaves $\varepsilon\approx1.95\times10^{-5}$ as the smallest value used; a final stage at $\varepsilon=0$ follows.  The stage tolerance tightens as $L_{\min}=L\,\varepsilon/\varepsilon_0$, and parameters are warm-started across stages.  All weight parameters are uniformly set to $1$. The relaxation speed $a$ is chosen to satisfy the sub-characteristic condition \cref{eq:subchar} on each problem's range of states. The problem-dependent network and training settings (relaxation speed, width, depth, learning rate, weight parameters and the numbers of interior and initial/boundary sample points) are listed per benchmark in \cref{tab:perproblem}.

\begin{table}[htbp]
  \centering
  \caption{Continuation schedule used in \cref{sec:numerics}.  Starting from
    $\varepsilon_0=10^{-2}$, the relaxation parameter is halved at each stage down
    to $\varepsilon^\ast$, with the stage tolerance $L_{\min}$ tightened in
    proportion to $\varepsilon$ and the network warm-started across stages; a
    final stage at $\varepsilon=0$ sharpens the solution near the limit.}
  \label{tab:schedule}
  \smallskip
  \begin{tabular}{lll}
    \toprule
    Symbol & Meaning & Value \\
    \midrule
    $\varepsilon_0$ & initial relaxation parameter & $10^{-2}$ \\
    $r$ & geometric decay factor & $1/2$ \\
    $\varepsilon^\ast$ & continuation stopping value & $10^{-5}$ \\
    \#stages & number of $\varepsilon$-stages (plus final $\varepsilon=0$) & $10$ \\
    $L_{\min}$ & stage loss tolerance & $L\,\varepsilon/\varepsilon_0$ \\
    optimizer & -- & Adam \\
    $\lambda_{data}$ & the weight parameter for $\mathcal{L}_{\mathrm{data}}$& 1 \\
    $\lambda_{pde}$ & the weight parameter for $\mathcal{L}_{\mathrm{pde}}$ & 1 \\
    \bottomrule
  \end{tabular}
\end{table}

\begin{table}[htbp]
  \centering
  \caption{Per-benchmark network and training settings.  For each problem the
    table lists the relaxation speed $a$ (chosen to satisfy the
    sub-characteristic condition \cref{eq:subchar} on that problem's range of
    states), the width and depth of the shared $\tanh$ network, the base learning
    rate $\eta_0$, and the numbers of interior and initial/boundary sample
    points.}
  \label{tab:perproblem}
  \smallskip
  \small
  \setlength{\tabcolsep}{5pt}
  \begin{tabular}{lccccc}
    \toprule
    Problem & $a$ & width & depth & $\eta_0$
      & $N_{\mathrm{int}}/N_{\mathrm{ic}}$ \\
    \midrule
    Burgers
      & $5$ & $160$ & $3$
      & $0.3$ & $4000/1000$ \\
    Shallow water
      & $5$ & $160$ & $3$
      & $0.5$ & $40000/10000$ \\
    Sod
      & $5$ & $512$ & $3$
      & $0.5$ & $40000/10000$ \\
    \bottomrule
  \end{tabular}
\end{table}

%\FloatBarrier
\subsection{Burgers equation}\label{subsec:burgers}
On $\Omega=[0,1]\times[-1,1]$ we solve $\partial_t u+\partial_x f(u)=0$ with
$f(u)=\tfrac12u^2$ and data
\begin{equation}\label{eq:burgers-ic}
  u_0(x)=
  \begin{cases}
    \phantom{-}1, & -0.5<x<0.5,\\
    -1, & -1<x<-0.5\ \text{or}\ 0.5<x<1 .
  \end{cases}
\end{equation}
For $t\in[0,1)$ (before the rarefaction fan meets the stationary shock at
$x=\tfrac12$) the exact solution is
\begin{equation}\label{eq:burgers-exact}
  u(x,t)=
  \begin{cases}
    -1, & x<-\tfrac12-t,\\[2pt]
    \dfrac{x+\tfrac12}{t}, & -\tfrac12-t<x<-\tfrac12+t,\\[6pt]
    \phantom{-}1, & -\tfrac12+t\le x\le\tfrac12,\\[2pt]
    -1, & x>\tfrac12 .
  \end{cases}
\end{equation}
The relaxation system is:
\begin{equation}
    \begin{cases}
        \partial_t u+\partial_x v=0, \\
\varepsilon(\partial_t v+a^2\partial_x u)=\tfrac12u^2-v. 
    \end{cases}
\end{equation}
\Cref{fig:burgers}
compares JXRGCM with RelaxNN \cite{ZhouMa2024}, which fixes $\varepsilon$ throughout training.  JXRGCM attains $\mathrm{RMSE}\approx3.79\times10^{-2}$, while the fixed-$\varepsilon$ RelaxNN attains an RMSE of approximately $3.00\times10^{-1}$ and fails to resolve the sharp jump in the solution. This performance gap arises because JXRGCM gradually drives the relaxation parameter $\varepsilon$ to zero over the course of computation, whereas RelaxNN maintains $\varepsilon$ fixed at a constant non-zero value throughout.  Subject to the persistent non-zero $\varepsilon$ constraint, the continuous network cannot represent the sharp physical discontinuity, so the fixed-$\varepsilon$ solution remains excessively smeared near the wave fronts and exhibits an RMSE roughly eight times larger, with visible deviations in the shock region. \Cref{fig:burgers loss} shows the evolution of the training loss over the continuation process for the Burgers problem. Each reduction of $\varepsilon$ produces a brief increase in the loss as the target relaxation profile sharpens, followed by a rapid recovery from the warm start and a renewed decrease in the loss.

\begin{figure}[htbp]
  \centering
  \begin{subfigure}[b]{0.49\textwidth}\centering
    \includegraphics[width=\textwidth]{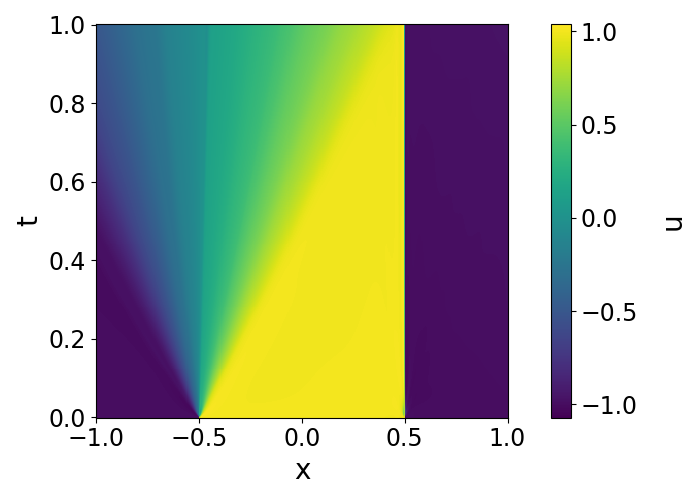}
    \caption{$u$: JXRGCM}\label{fig:burgers-jxrgcm-u}\end{subfigure}
  \begin{subfigure}[b]{0.49\textwidth}\centering
    \includegraphics[width=\textwidth]{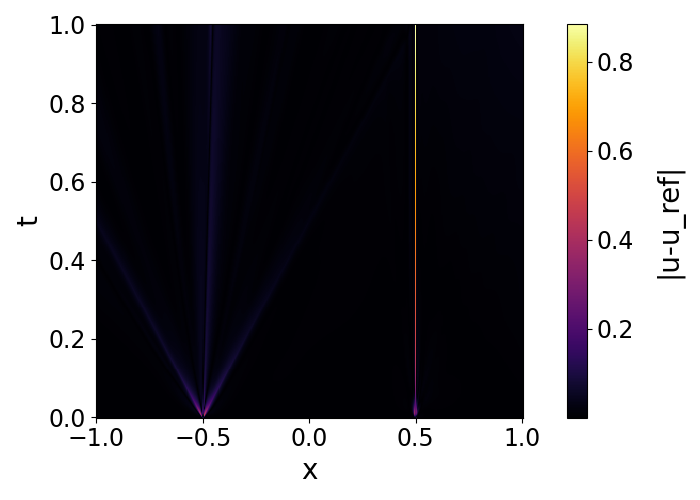}
    \caption{$u$: JXRGCM, absolute error}\label{fig:burgers-jxrgcm-err}\end{subfigure}

  \vspace{0.5em}
  \begin{subfigure}[b]{0.49\textwidth}\centering
    \includegraphics[width=\textwidth]{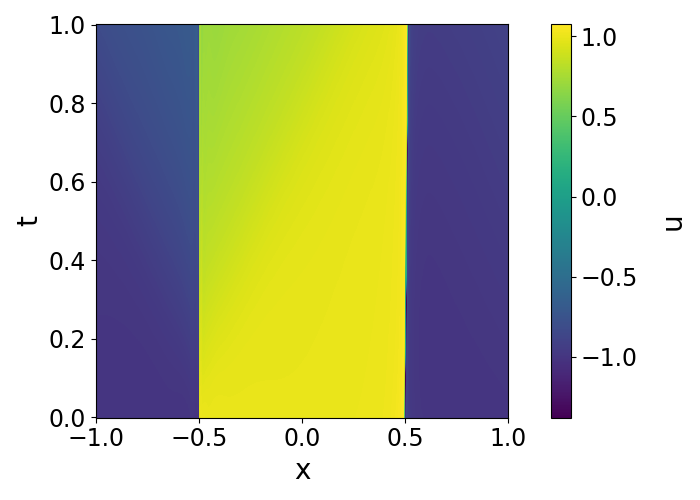}
    \caption{$u$: RelaxNN}\label{fig:burgers-relaxnn-u}\end{subfigure}
  \begin{subfigure}[b]{0.49\textwidth}\centering
    \includegraphics[width=\textwidth]{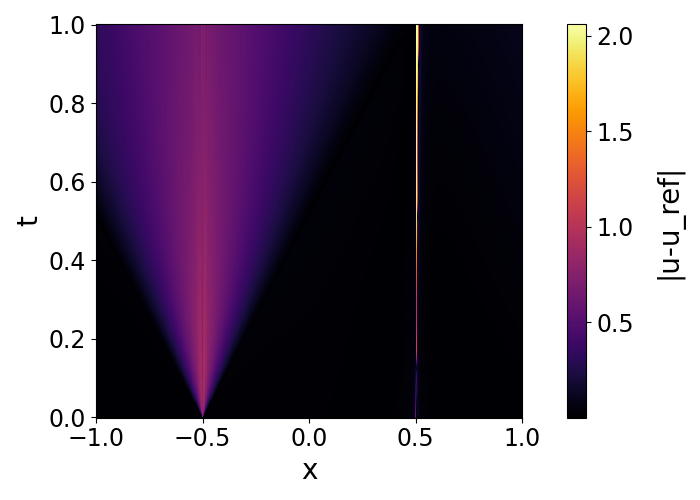}
    \caption{$u$: RelaxNN, absolute error}\label{fig:burgers-relaxnn-err}\end{subfigure}
  \caption{Burgers equation: JXRGCM (top) versus the fixed-$\varepsilon$ RelaxNN
    (bottom).  Left panels show the predicted solution $u(t,x)$ over the
    space--time domain and right panels the absolute error against the exact
    solution \cref{eq:burgers-exact}.  JXRGCM keeps the rarefaction fan and the
    stationary shock sharp, with error concentrated in a thin band along the
    fronts, whereas the fixed-$\varepsilon$ RelaxNN smears the discontinuity and
    spreads error through the interior.}
  \label{fig:burgers}
\end{figure}

\begin{figure}[htbp]
    \centering
    \includegraphics[width=0.9\linewidth]{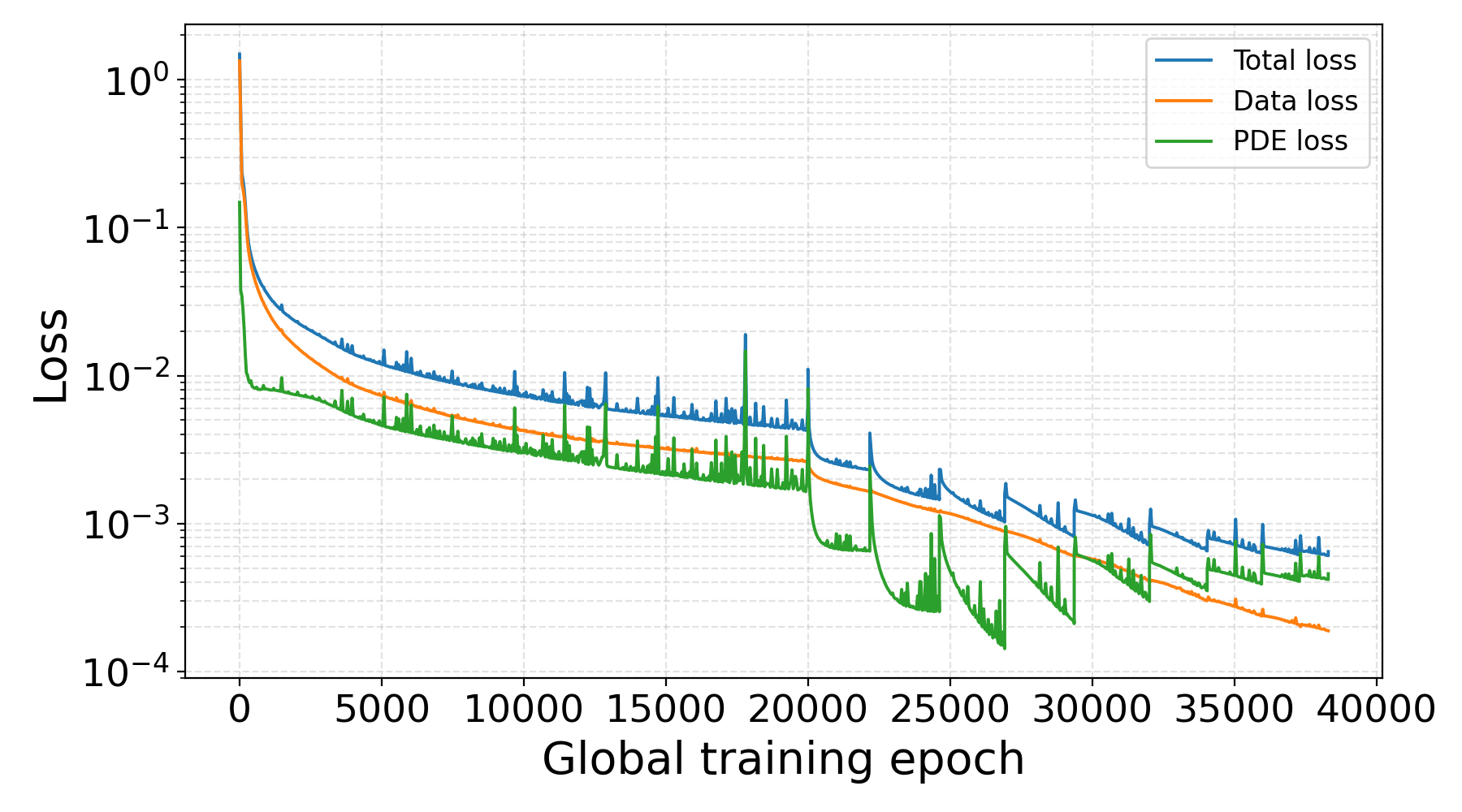}
    \caption{The convergence curves of the loss function for the Burgers problem. Each reduction of $\varepsilon$ produces a short-lived spike in the loss, reflecting the sudden sharpening of the relaxation profile at the start of a new continuation stage. The network then rapidly readjusts from the warm start, and the loss resumes its downward trend over the subsequent iterations.}
    \label{fig:burgers loss}
\end{figure}

\subsection{Shallow-water dam-break problem}\label{subsec:shallow-water}
On $\Omega=[0,1]\times[-1,1]$ we solve $\partial_t h+\partial_x(hu)=0$, $\partial_t(hu)+\partial_x(hu^2+gh^2/2)=0$ with $g=9.81$ and dam-break data $(h,u)=(1,0)$ for $x\le0$, $(0.5,0)$ for $x>0$.  The exact solution is a left rarefaction, a constant middle state $(h^\ast,u^\ast)=(0.727,0.923)$, and a right shock:
\begin{equation}\label{eq:swe-exact-h}
  h(x,t)=
  \begin{cases}
    1, & x\le-3.132\,t,\\[2pt]
    \dfrac{(6.264-x/t)^2}{9g}, & -3.132\,t<x<-1.747\,t,\\[6pt]
    0.727, & -1.747\,t\le x\le2.958\,t,\\[2pt]
    0.5, & x>2.958\,t,
  \end{cases}
\end{equation}
\begin{equation}\label{eq:swe-exact-u}
  u(x,t)=
  \begin{cases}
    0, & x\le-3.132\,t,\\[2pt]
    \frac{2}{3}\bigl(x/t+\sqrt{g\,h_L}\bigr)=\frac{2x}{3t}+2.088,
      & -3.132\,t<x<-1.747\,t,\\[6pt]
    0.923, & -1.747\,t\le x\le2.958\,t,\\[2pt]
    0, & x>2.958\,t,
  \end{cases}
\end{equation}
the constants being the wave speeds of the exact Riemann solution for $g=9.81$ (head $-\sqrt{gh_L}=-3.132$, tail $u^\ast-\sqrt{gh^\ast}=-1.747$, shock speed $2.958$).  With relaxation fluxes $V=(v_1,v_2)$ for the fluxes of $(h,m)$, $m=hu$,
\begin{equation}\label{eq:swe-relax}
  \left\{
  \begin{aligned}
    \partial_t h+\partial_x v_1 &= 0, &
    \varepsilon(\partial_t v_1+a^2\partial_x h) &= m-v_1,\\
    \partial_t m+\partial_x v_2 &= 0, &
    \varepsilon(\partial_t v_2+a^2\partial_x m) &= (hu^2+\tfrac12gh^2)-v_2 .
  \end{aligned}
  \right.
\end{equation}
JXRGCM attains $\mathrm{RMSE}\approx1.17\times10^{-2}$ for $h$ and $4.16\times10^{-2}$ for $u$ (\cref{fig:shallow-water}). \Cref{fig:dam-break loss} shows the evolution of the training loss throughout the continuation process for the shallow-water dam-break problem. Reductions in $\varepsilon$ lead to transient increases in the loss as the network adjusts to progressively sharper wave structures, after which the loss decreases again and the training remains well behaved across the continuation stages.

\begin{figure}[htbp]
  \centering
  \begin{subfigure}[b]{0.49\textwidth}\centering
    \includegraphics[width=\textwidth]{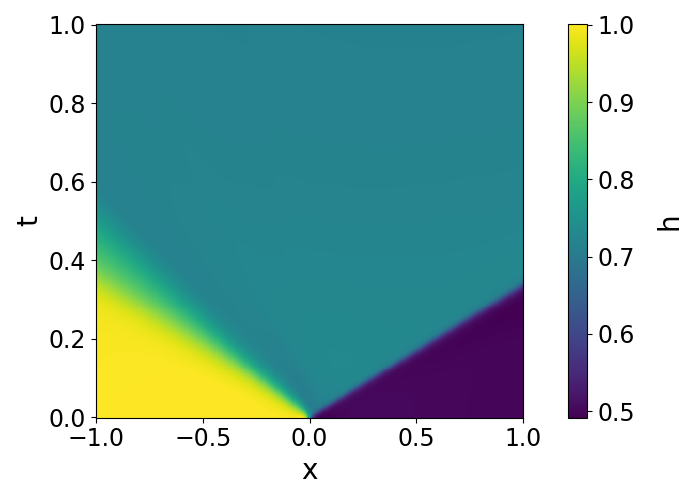}
    \caption{predicted $h$}\label{fig:sw-h}\end{subfigure}
  \begin{subfigure}[b]{0.49\textwidth}\centering
    \includegraphics[width=\textwidth]{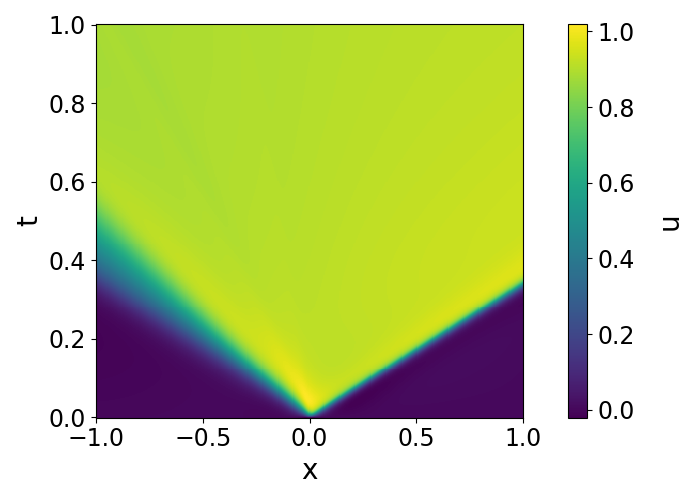}
    \caption{predicted $u$}\label{fig:sw-u}\end{subfigure}

  \vspace{0.5em}
  \begin{subfigure}[b]{0.49\textwidth}\centering
    \includegraphics[width=\textwidth]{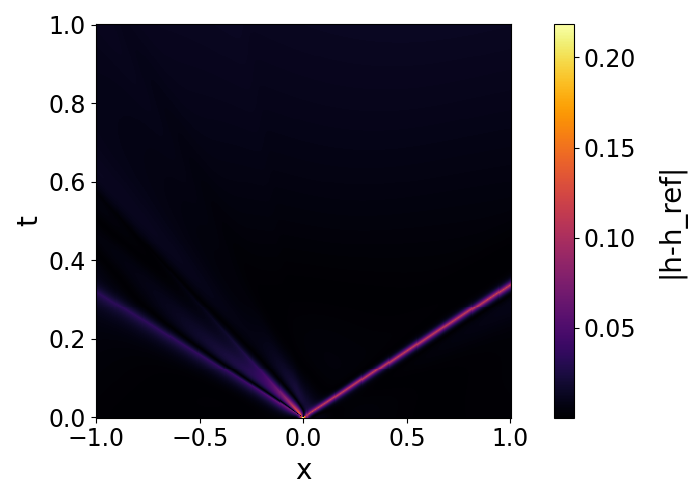}
    \caption{$h$, absolute error}\label{fig:sw-h-err}\end{subfigure}
  \begin{subfigure}[b]{0.49\textwidth}\centering
    \includegraphics[width=\textwidth]{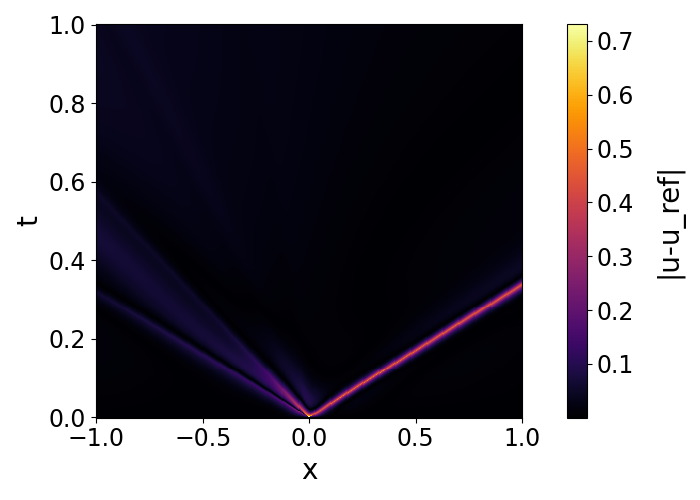}
    \caption{$u$, absolute error}\label{fig:sw-u-err}\end{subfigure}
  \caption{Shallow-water dam break: JXRGCM predictions and absolute errors.  Top
    panels show the predicted height $h$ and velocity $u$; bottom panels show the
    absolute error against the exact Riemann solution
    \cref{eq:swe-exact-h,eq:swe-exact-u}.  The left rarefaction, constant middle
    state, and right shock are all captured, with the largest errors localized
    along the shock and the edges of the rarefaction.}
  \label{fig:shallow-water}
\end{figure}

\begin{figure}[htbp]
    \centering
    \includegraphics[width=0.9\linewidth]{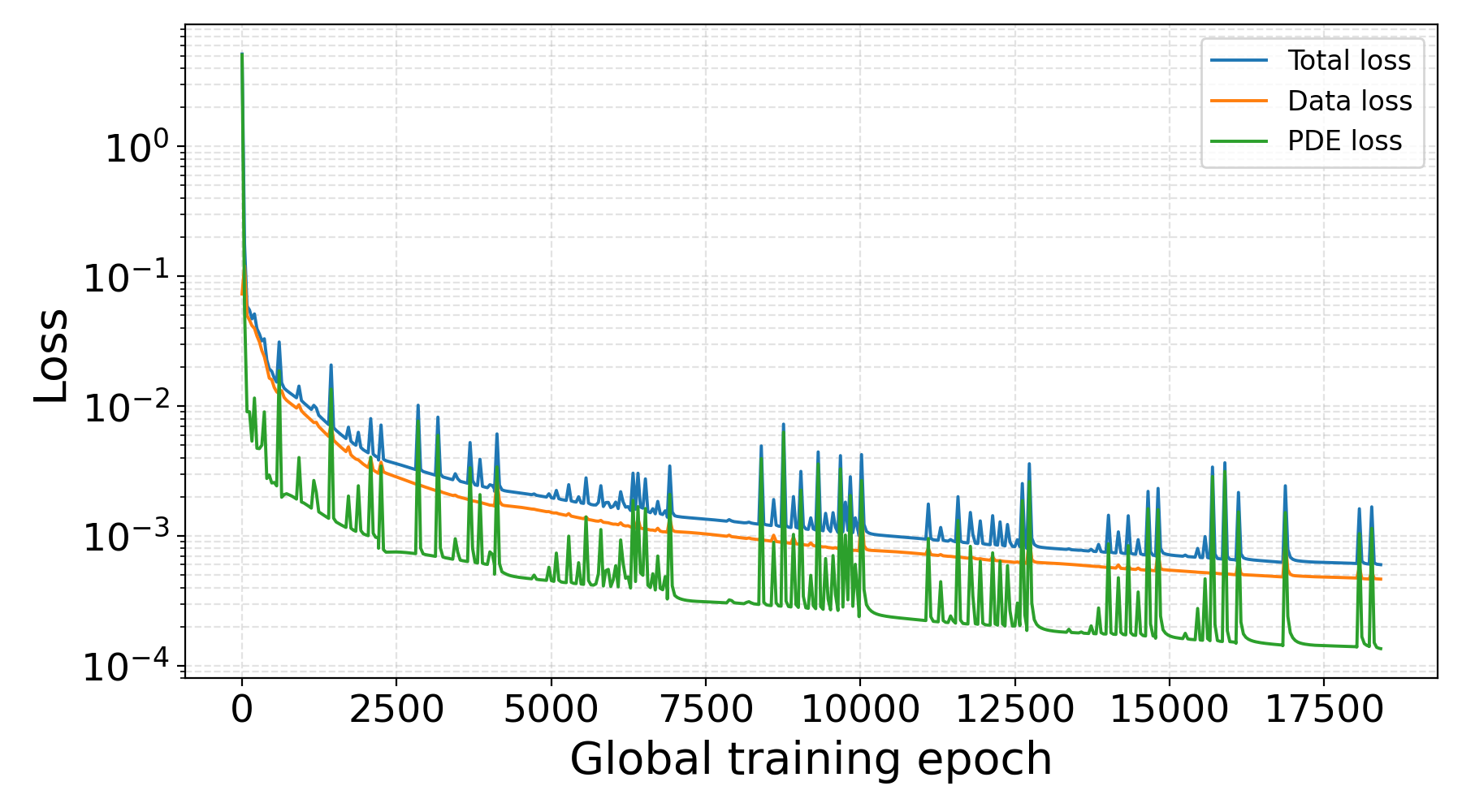}
    \caption{The convergence curves of the loss function for the shallow-water dam-break problem. The peaks correspond to the stages at which $\varepsilon$ is reduced, causing a temporary increase in the loss as the network adapts to a sharper relaxation profile. After each transition, the loss decreases again and the training remains stable throughout the continuation process.}
    \label{fig:dam-break loss}
\end{figure}
\subsection{Sod shock tube problem}\label{subsec:sod}
On $\Omega=[0,0.25]\times[-0.5,0.5]$ we solve the Euler equations
$\partial_t\rho+\partial_x(\rho u)=0$,
$\partial_t(\rho u)+\partial_x(\rho u^2+p)=0$,
$\partial_t E+\partial_x(u(E+p))=0$ with $E=\tfrac{p}{\gamma-1}+\tfrac12\rho u^2$,
$\gamma=1.4$, and Riemann data $(\rho_L,u_L,p_L)=(3,0,3)$,
$(\rho_R,u_R,p_R)=(1,0,1)$.  The exact solution is a left rarefaction, a contact
discontinuity, and a right shock, with $p_\ast=1.693$, $u_\ast=0.464$,
$a_L=\sqrt{\gamma p_L/\rho_L}=1.183$, $S_{HL}=-1.183$, $S_{TL}=-0.626$,
$S_R=1.494$, $\rho_{L,\ast}=1.993$, $\rho_{R,\ast}=1.451$, from the exact
Riemann solver \cite{Toro2009}.  With $\mathbf{u} = (\rho, m, E)^\top$, $m = \rho u$, and the relaxation flux vector $\mathbf{v} = (v_1, v_2, v_3)^\top$, the Jin--Xin relaxation system for the Euler equations is given by
\begin{equation}\label{eq:euler-relax}
  \partial_t \mathbf{u} + \partial_x \mathbf{v} = 0, \quad
  \varepsilon (\partial_t \mathbf{v} + a^2 \partial_x \mathbf{u}) = \mathbf{f}(\mathbf{u}) - \mathbf{v},
\end{equation}
where $\mathbf{f}(\mathbf{u}) = (m, \, \rho u^2 + p, \, u(E + p))^\top$ is the physical flux function.
\Cref{fig:sod loss} demonstrates the convergence curves of the loss function for the Sod shock tube problem.
\Cref{tab:sod} reports the root mean square error (RMSE) of the artificial-viscosity PINN \cite{VonNeumannRichtmyer1950,GhoreishiNaderan2026}, the gradient-annihilating PINN (GA-PINN) \cite{ferrer2024gradient}, the coupled-integral PINN (CI-PINN) \cite{WangYang2024}, and JXRGCM. JXRGCM yields the smallest error across all evaluation metrics, outperforming other physics-informed neural network architectures. To ensure a fair and rigorous comparison among all neural network-based methods (Artificial Viscosity, GA-PINN, CI-PINN, and JXRGCM), all models were evaluated under identical experimental conditions. Specifically, they shared the exact same network architecture (same depth and width), the same number of spatio-temporal collocation points, and were trained using the same optimization protocol (optimizer and learning rate schedule).

\begin{figure}[htbp]
  \centering
  \begin{subfigure}[b]{0.32\textwidth}\centering
    \includegraphics[width=\textwidth]{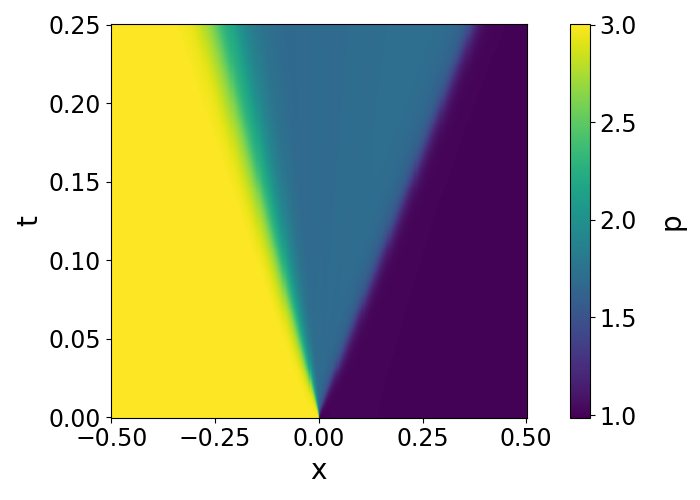}
    \caption{predicted $p$}\label{fig:sod-p}\end{subfigure}\hfill
  \begin{subfigure}[b]{0.32\textwidth}\centering
    \includegraphics[width=\textwidth]{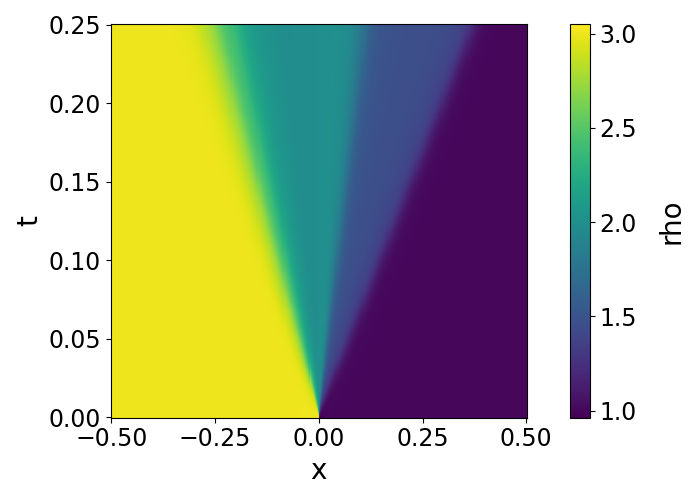}
    \caption{predicted $\rho$}\label{fig:sod-rho}\end{subfigure}\hfill
  \begin{subfigure}[b]{0.32\textwidth}\centering
    \includegraphics[width=\textwidth]{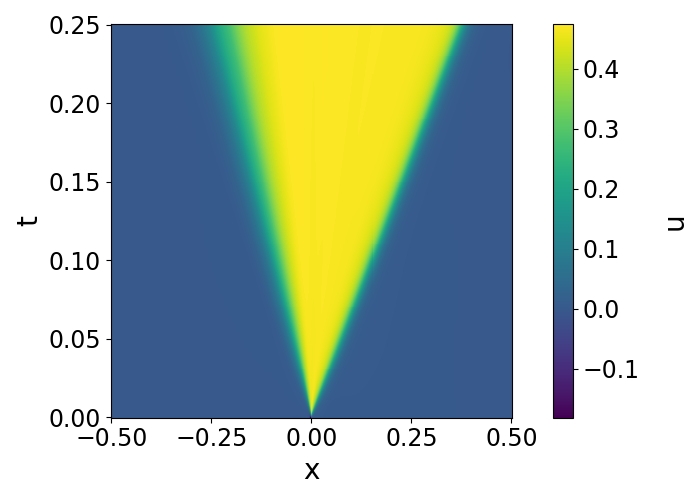}
    \caption{predicted $u$}\label{fig:sod-u}\end{subfigure}

  \vspace{0.5em}
  \begin{subfigure}[b]{0.32\textwidth}\centering
    \includegraphics[width=\textwidth]{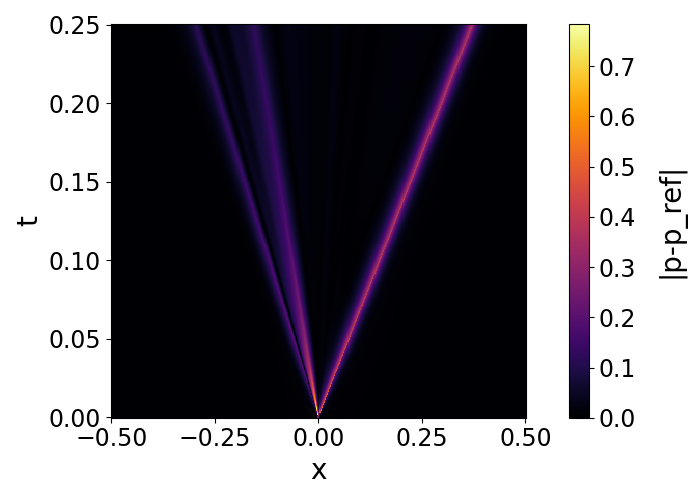}
    \caption{$p$, absolute error}\label{fig:sod-p-err}\end{subfigure}\hfill
  \begin{subfigure}[b]{0.32\textwidth}\centering
    \includegraphics[width=\textwidth]{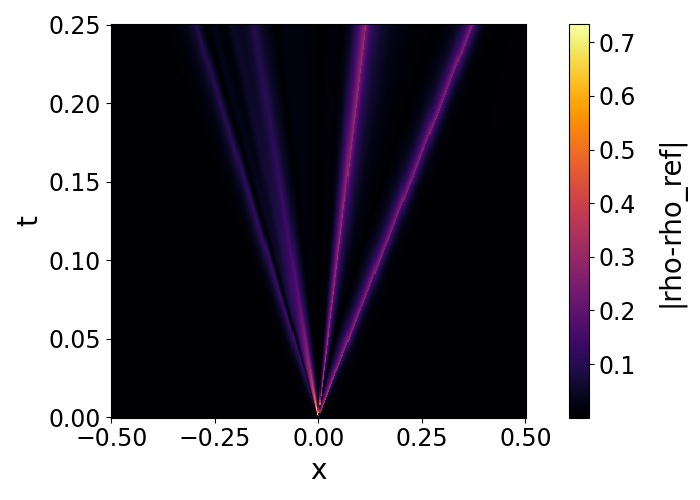}
    \caption{$\rho$, absolute error}\label{fig:sod-rho-err}\end{subfigure}\hfill
  \begin{subfigure}[b]{0.32\textwidth}\centering
    \includegraphics[width=\textwidth]{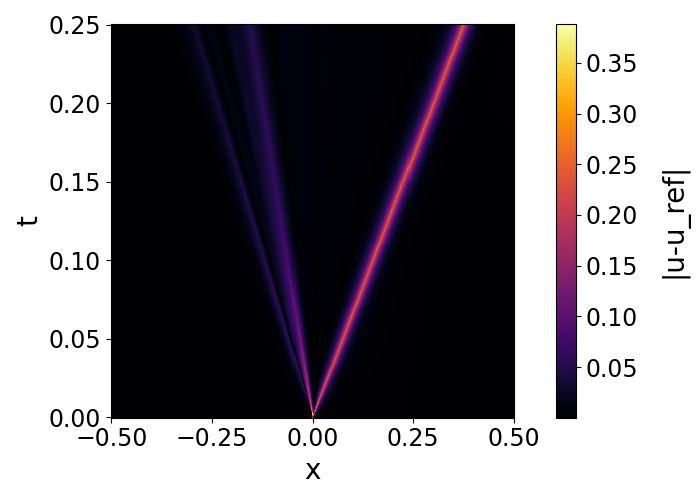}
    \caption{$u$, absolute error}\label{fig:sod-u-err}\end{subfigure}
  \caption{Sod shock tube: JXRGCM predictions and absolute errors.  Top panels
    show the predicted pressure $p$, density $\rho$, and velocity $u$; bottom
    panels show the absolute error against the exact Riemann solution
    \cite{Toro2009}.  The three wave families---the left rarefaction, the contact
    discontinuity, and the right shock---are resolved simultaneously, with error
    concentrated along the contact and the shock, where the density jump is
    sharpest.}
  \label{fig:sod}
\end{figure}

\begin{figure}[htbp]
    \centering
    \includegraphics[width=0.9\linewidth]{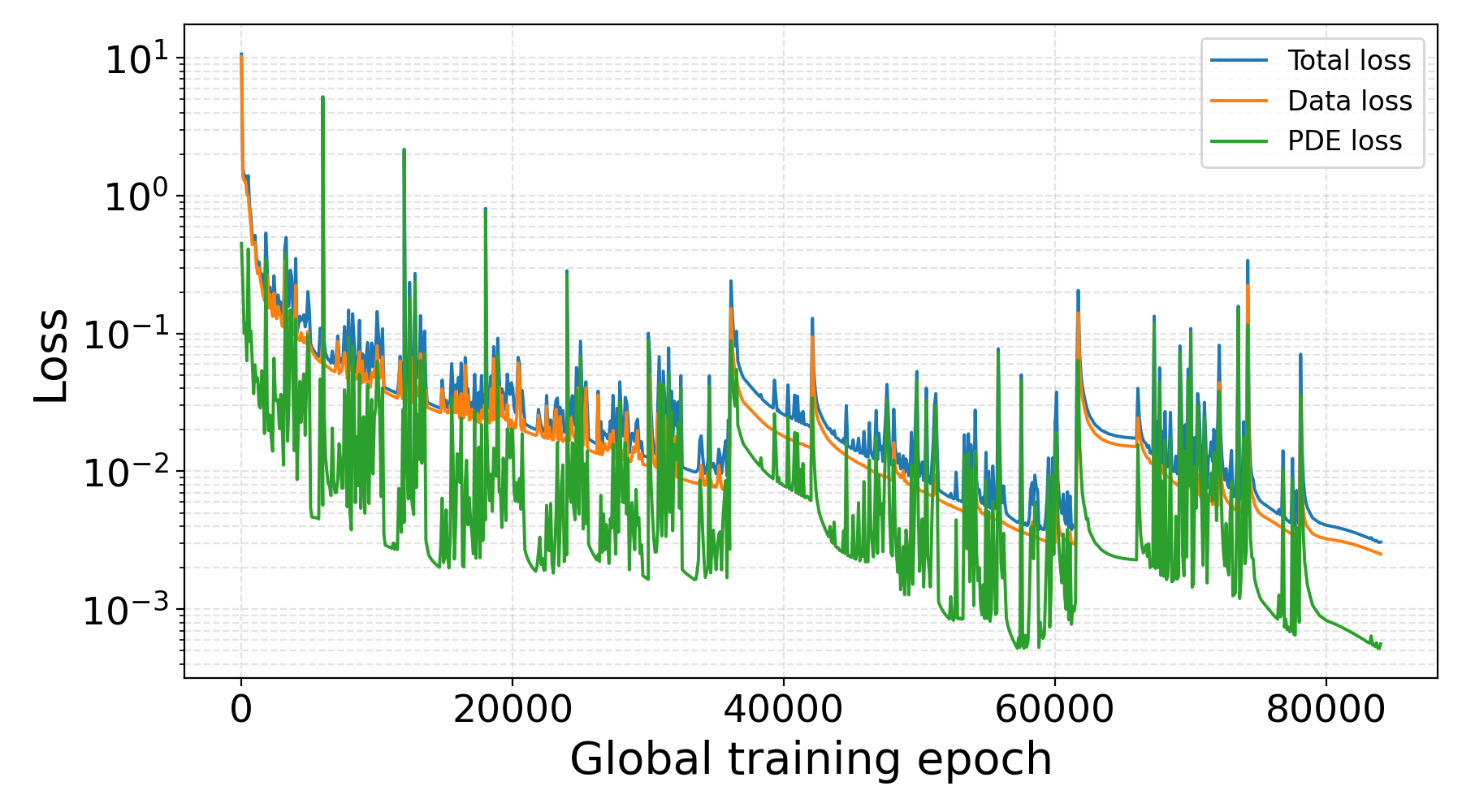}
    \caption{The convergence curves of the loss function for the Sod shock tube problem: The severe fluctuation in the PDE loss is mainly attributed to the heightened non-linearity of the equations and the identical residual weights adopted across numerical experiments for generalizability. Although gradient clipping and normalization help alleviate this behavior, we nevertheless obtain favorable convergence performance.}
    \label{fig:sod loss}
\end{figure}

\begin{table}[htbp]
  \centering
  \caption{Root mean square error (RMSE) for the Sod shock tube problem.  All
    methods use the same network architecture, the same collocation points, and
    the same optimization protocol, so the comparison isolates the effect of the
    formulation.  JXRGCM attains the smallest error in each of the three
    variables.}
  \label{tab:sod}
  \smallskip
  \begin{tabular}{lccc}
    \toprule
    Method & $\rho$ (RMSE) & $u$ (RMSE) & $p$ (RMSE) \\
    \midrule
    Artificial Viscosity & $2.13\times10^{-1}$ & $1.44\times10^{-1}$ & $2.53\times10^{-1}$ \\
    GA-PINN & $5.62\times10^{-2}$ & $4.12\times10^{-2}$ & $6.95\times10^{-2}$\\
    CI-PINN      & $1.18\times10^{-1}$ & $7.81\times10^{-2}$ & $1.11\times10^{-1}$ \\
    JXRGCM       & $4.60\times10^{-2}$ & $2.89\times10^{-2}$ & $4.85\times10^{-2}$ \\
    \bottomrule
  \end{tabular}
\end{table}

%======================================================================
\section{Conclusion}\label{sec:conclusion}
We introduced JXRGCM, a PINN framework that solves hyperbolic conservation laws by training on the Jin--Xin relaxation system and treating the relaxation parameter $\varepsilon$ as a continuation parameter driven to zero, addressing the conflict between network smoothness and limiting discontinuity present in fixed-$\varepsilon$ relaxation PINNs.  The analysis establishes regularity of the relaxation system (with a uniform-in-$\varepsilon$ existence time), an approximation-theoretic generalization bound, and an $\varepsilon$-uniform total-error estimate obtained through a sub-characteristic symmetrizer; for scalar conservation laws, combined with the relaxation limit, these results give an $L^2$-error bound of order $\mathcal{O}(\varepsilon^{1/4})$ for convergence of the numerical solution to the entropy solution, while a quadrature bound is introduced to establish a theoretical link between training and generalization errors.  We present three numerical examples: the Burgers equation, the shallow-water dam-break problem, and the Sod shock tube problem.  The scalar Burgers example is consistent with the scalar relaxation-limit setting, although its discontinuous initial data fall outside the smooth regularity assumptions used in the stability estimate; the shallow-water and Euler examples lie outside the convergence theory as well, since an explicit relaxation-limit rate for systems is not available (\cref{rem:scalar-scope}).  All three are reported as empirical evidence that the continuation strategy transfers across wave structures.

\section*{Acknowledgments}
Part of this work was done during the visit of Y.T. to the Department of Mathematics, University of California, Santa Barbara.  Y.T. would like to thank the department for the hospitality.  Y.T. is also grateful to Professor Weian Yong for useful discussions.

\appendix
\crefalias{section}{appendix}
\FloatBarrier
\section{Chapman--Enskog expansion}\label{app:CE}
We sketch the derivation of \cref{eq:chapman-enskog,eq:subchar} for the scalar
($2\times2$) system $\partial_t u+\partial_x v=0$,
$\varepsilon(\partial_t v+a^2\partial_x u)=f(u)-v$.  Expanding
$v=v_0+\varepsilon v_1+\mathcal{O}(\varepsilon^2)$ and matching orders gives
$v_0=f(u)$ and $v_1=-(\partial_t v_0+a^2\partial_x u)$.  Substituting into the
first equation,
\[
  \partial_t u+\partial_x f(u)
  = -\varepsilon\,\partial_x v_1
  = \varepsilon\,\partial_x\bigl(\partial_t f(u)+a^2\partial_x u\bigr)
  = \varepsilon\,\partial_x\!\Bigl(\bigl(a^2-f'(u)^2\bigr)\partial_x u\Bigr),
\]
using $\partial_t f(u)=f'(u)\partial_t u=-f'(u)\partial_x f(u)
=-f'(u)^2\partial_x u$ at leading order.  The correction is dissipative iff
$a^2-f'(u)^2>0$, i.e.\ $|f'(u)|<a$; the system version is \cref{eq:subchar}.

\FloatBarrier
\bibliographystyle{siamplain}
\bibliography{references}

\end{document}